\def \dim{\mathop{\rm dim}\nolimits}
\def \cod{\mathop{\rm cod}\nolimits}
\def \corank{\mathop{\rm corank}\nolimits}
\def \M{{\mathfrak m}} % maximal ideal symbol
\def \R{{\mathbb R}}
\def \C{{\mathbb C}}
\def \K{{\mathbb K}}
\def \RR{{\cal R}}
\def \AA{{\cal A}}
\def \EE{{\cal E}}
\def \KK{{\cal K}}
\def \OO{{\cal O}}
\newtheorem{theorem}{Theorem}[section]
\newtheorem{proposition}[theorem]{Proposition}
\newtheorem{remark}[theorem]{Remark}
\newtheorem{example}[theorem]{Example}
\newtheorem{definition}[theorem]{Definition}
\newenvironment{proof}
 {\begin{trivlist} \item[\hskip \labelsep {\bf Proof.}]}
 {\hfill$\Box$\end{trivlist}}
\def\X{\xi }
\def\lt{\left}
\def\rt{\right}
\begin{document}
\renewcommand{\theenumi}{(\roman{enumi})}
\normalsize

\title{$\AA $-classification of map-germs via $_V\KK $-equivalence}
\author{Kevin Houston \\
School of Mathematics \\ University of Leeds \\ Leeds, LS2 9JT, U.K. \\
e-mail: k.houston@leeds.ac.uk \\
http://www.maths.leeds.ac.uk/$\sim $khouston/\\
\ \\
Roberta Wik Atique\\
ICMC/USP – Campus de S\~{a}o Carlos\\
Caixa Postal 668\\
13560-970 S\~{a}o Carlos-SP, Brazil.
}
\date{\today }
\maketitle
\begin{abstract}
The classification of map-germs up to the natural right-left equivalence (also known as $\AA $-equivalence) is often complicated. Certainly it is more complicated than $\KK $-equivalence which is extremely easy to work with because the associated tangent spaces are not `mixed' modules as they are in the $\AA $-equivalence case.

In this paper we use a version of $\KK$-equivalence, denoted $_V\KK$-equivalence, that is defined using $\KK $-equivalences that preserve a variety in the source of maps to classify maps up to $\AA $-equivalence. This is possible through making clear the connection between the two equivalences -- previous work by Damon mostly focussed on the relation between the codimensions associated to the maps.

To demonstrate the power and efficiency of the method we give a classification of certain $\AA _e$-codimension 2 maps from $n$-space to $n+1$-space. The proof using $_V\KK $-equivalence is considerably shorter -- by a wide margin -- than one using $\AA $-equivalence directly.

58K40 58K50 MSC. Keywords: Singularities, classification.
% 58K40 Classification; finite determinacy of map germs
% 58K50 Normal forms
\end{abstract}

\section{Introduction}
The classification of map-germs under $\AA $-equivalence -- a central problem in the study of singularities -- is considerably more difficult when compared to that of other equivalences such as $\RR $- and $\KK $-equivalence. One method for $\AA $-classification is to first classify up to $\KK $-equivalence and then find some method for distinguishing the $\AA $-classes within the $\KK $-class.
Similarly the classification results in \cite{cooper} and \cite{codim1} rely on the fact that for $\AA _e$-codimension one maps in the complex case the $\AA $-orbit is open in the $\KK$-orbit and hence there is only one $\AA _e$-codimension one germ.

For maps $h:(\K ^p,0)\to (\K ^q,0)$ we can consider maps under $\KK $-equivalence. If we now restrict to $\KK $-equivalences which preserve some subgerm of $(\K ^p,0)$, then we get $_V \KK $-equivalence. As one might expect this equivalence behaves much like $\KK $-equivalence. In particular algebraic structures will be similar to the $\KK $ case. These structures are in some sense simpler than those appearing when we consider  $\AA $-equivalence.

Suppose that $F:(\K ^n,S)\to (\K ^p,0)$ is a stable smooth map, $n<p$, $S$ is a finite set and $V$ is the image of $F$. For a smooth map $h:(\K ^p,0)\to (\K ^q,0)$ consider the map $h^\# (F)$ given by $h^\# (F)=F|((h\circ F)^{-1}(0),S) \to (h^{-1}(0),0)$.

Then we have the following.
\begin{theorem}
Suppose that $h:(\K^p,0)\to (\K^q,0)$ and
 $\widetilde{h}:(\K^p,0)\to (\K^q,0)$
 are submersions with $F$ transverse to $h^{-1}(0)$ and $\widetilde{h}^{-1}(0)$.
If $h^\# (F)$ and $\widetilde{h}^\# (F)$ are finitely $\AA $-determined, then
\[
h^\# (F) \sim _\AA \widetilde{h}^\# (F) \iff
h \sim_{_V \KK }  \widetilde{h}   .
\]
\end{theorem}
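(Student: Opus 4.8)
The plan is to prove the two implications separately. The forward implication $h\sim_{{}_V\KK}\widetilde h\ \Rightarrow\ h^\#(F)\sim_\AA\widetilde h^\#(F)$ is the easy half and uses only stability of $F$, not finite determinacy; it is essentially Damon's observation. The converse is the substantial part, and it is there that the hypothesis of finite $\AA$-determinacy is indispensable.

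\emph{The implication $\Leftarrow$.} A ${}_V\KK$-equivalence between $h$ and $\widetilde h$ consists of a diffeomorphism germ $\phi\colon(\K^p,0)\to(\K^p,0)$ with $\phi(V)=V$ together with an invertible matrix $M\in GL_q(\OO_p)$ such that $\widetilde h=M\cdot(h\circ\phi^{-1})$; in particular $\phi(h^{-1}(0))=\widetilde h^{-1}(0)$. Since $F$ is stable with image $V$, diffeomorphism germs of $(\K^p,0)$ preserving $V$ lift through $F$ — this is the integrated form of the fact, used already above, that for stable $F$ the module $\derlog V$ consists of liftable vector fields. So there is a diffeomorphism germ $\Phi\colon(\K^n,S)\to(\K^n,S)$ with $F\circ\Phi=\phi\circ F$. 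Then $\Phi$ carries $(h\circ F)^{-1}(0)=F^{-1}(h^{-1}(0))$ onto $(\widetilde h\circ F)^{-1}(0)=F^{-1}(\widetilde h^{-1}(0))$, $\phi$ carries $h^{-1}(0)$ onto $\widetilde h^{-1}(0)$, and restricting $F\circ\Phi=\phi\circ F$ yields $\widetilde h^\#(F)\circ(\Phi|)=(\phi|)\circ h^\#(F)$. Hence $h^\#(F)\sim_\AA\widetilde h^\#(F)$.

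\emph{The implication $\Rightarrow$.} The first step is the infinitesimal comparison: building on Damon's codimension formula, I would use it in the refined form of an isomorphism $T^1_{{}_V\KK}(h)\cong T^1_\AA(h^\#(F))$ that is natural with respect to unfoldings, so that $(-)^\#(F)$ carries a ${}_V\KK$-versal unfolding of $h$ to an $\AA$-versal unfolding of $h^\#(F)$. Finite $\AA$-determinacy of $h^\#(F)$ makes $T^1_\AA(h^\#(F))$, hence $T^1_{{}_V\KK}(h)$, finite dimensional, so finite-dimensional versal unfoldings exist on both sides and the numbers ${}_V\KK_e\text{-}\codim(h)=\AA_e\text{-}\codim(h^\#(F))$ agree and are orbit invariants. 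Fixing a ${}_V\KK$-versal unfolding $H$ of $h$ on base $(\K^d,0)$, the induced unfolding $H^\#(F\times\mathrm{id})$ is $\AA$-versal for $h^\#(F)$, and by naturality the ``$\AA$-constant stratum of $0$'' in its base coincides with the ``${}_V\KK$-constant stratum of $0$'' in the base of $H$. Consequently, for a germ $h'$ realised in the versal family as $h'=H_u$, one has $(H_u)^\#(F)\sim_\AA h^\#(F)\iff H_u\sim_{{}_V\KK}h$ — the \emph{local} form of the theorem.

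It remains to pass from this local statement to the general one, and this is where I expect the real obstacle. Writing $W=h^{-1}(0)$ and $\widetilde W=\widetilde h^{-1}(0)$, one must deduce from $h^\#(F)\sim_\AA\widetilde h^\#(F)$ that the triples $(\K^p,V,W)$ and $(\K^p,V,\widetilde W)$ are ambient diffeomorphic (the matrix $M$ is then automatic, since two submersions with the same zero-set differ by multiplication by an element of $GL_q(\OO_p)$). What the $\AA$-equivalence supplies directly is only an identification of $W$ and $\widetilde W$ \emph{as slices of the stable map $F$}, i.e.\ of $F|\colon F^{-1}(W)\to W$ with $F|\colon F^{-1}(\widetilde W)\to\widetilde W$, not of their ambient positions; extending this to the ambient space compatibly with $V$ is the crux. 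I would do it by a homotopy argument of the type used in the proof of the finite $\AA$-determinacy theorem: connect $\widetilde h^\#(F)$ to $h^\#(F)$ along the $\AA$-orbit, lift the resulting path through $h$ to a path $h_t$ of submersions transverse (via $F$) to their zero-sets — the lift existing because of the surjectivity built into the $T^1$-isomorphism — observe that ${}_V\KK_e\text{-}\codim(h_t)=\AA_e\text{-}\codim(h_t^\#(F))$ is constant, and invoke the triviality of families of constant codimension for geometric subgroups to conclude $h\sim_{{}_V\KK}h_1\sim_{{}_V\KK}\widetilde h$. The delicate points throughout are controlling the obstruction in the finite-dimensional module $T^1_\AA(h^\#(F))$ (via Nakayama) and ensuring the path stays inside the class of submersions transverse to $V$, both of which rest squarely on finite $\AA$-determinacy.
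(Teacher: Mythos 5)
Your overall architecture (prove the two implications separately) matches the paper, but both halves diverge from the paper's argument and both contain genuine gaps.

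In the implication from $_V\KK$ to $\AA$, your whole argument rests on the claim that a diffeomorphism germ $\phi$ of $(\K^p,0)$ with $\phi(V)=V$ lifts through the stable map $F$ to a diffeomorphism $\Phi$ of the source with $F\circ\Phi=\phi\circ F$. That is not ``the integrated form'' of the statement that $\derlog{V}$ consists of liftable vector fields: integrating liftable vector fields only produces lifts of diffeomorphisms that are time-one maps of flows inside $\derlog{V}$, i.e.\ of diffeomorphisms connected to the identity through $V$-preserving ones, and an arbitrary $\phi$ with $\phi(V)=V$ need not be of this form. In fact the assertion you are using is essentially the ``stable germs are determined by their discriminants'' theorem, which is exactly what the paper imports from du~Plessis--Gaffney--Wilson, and which requires both the finite $\AA$-determinacy of $h^\#(F)$ and $\widetilde h^\#(F)$ and the dimension/corank hypothesis ($n<p$ here). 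Your remark that this direction ``uses only stability of $F$, not finite determinacy'' is therefore wrong: the paper applies the du~Plessis--Gaffney--Wilson results to the finitely determined germs $h^\#(F)$ and $\widetilde h^\#(F)$, whose discriminants are matched up by the $V$-preserving diffeomorphism $\psi$, to conclude they are right-equivalent. Without that citation (or a proof of your lifting claim) this half is incomplete.

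In the implication from $\AA$ to $_V\KK$, you correctly identify the crux --- upgrading the $\AA$-equivalence of the slices $h^\#(F)$ and $\widetilde h^\#(F)$ to an ambient diffeomorphism of $(\K^p,0)$ preserving $V$ and carrying $h^{-1}(0)$ to $\widetilde h^{-1}(0)$ --- but your proposed route (a natural $T^1$ isomorphism, versal unfoldings, constant-codimension strata, and a path-lifting/triviality argument) is only a programme, with the key steps (naturality of the isomorphism, versality of $H^\#(F\times\mathrm{id})$, lifting the path of submersions) left unverified. The paper resolves this point with one clean tool you do not invoke: since $F$ is transverse to $h^{-1}(0)$ and to $\widetilde h^{-1}(0)$, it is a stable unfolding of both $h^\#(F)$ and $\widetilde h^\#(F)$, and the uniqueness theorem for stable unfoldings (Theorem~3.1, p.~86 of Gibson--Wirthm\"uller--du~Plessis--Looijenga) says that the $\AA$-equivalence $(\rho,\lambda)$ of the slices extends to an isomorphism of unfoldings, i.e.\ to diffeomorphisms $(\varphi,\psi)$ with $\psi\circ F=F\circ\varphi$; commuting with $F$ forces $\psi(V)=V$, and $\psi$ carries $h^{-1}(0)$ to $\widetilde h^{-1}(0)$, which is precisely the required $_V\KK$-equivalence. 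I would recommend replacing your versality/homotopy sketch with this unfolding-uniqueness argument, and replacing your lifting claim with the du~Plessis--Gaffney--Wilson theorem, noting explicitly where finite $\AA$-determinacy and $n<p$ enter.
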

A more general version of this is proved in Theorem~\ref{mainthm}.
The main point of this theorem is that $\AA $-equivalence may be studied with the more amenable $_V\KK $-equivalence. In many previous classifications under $\AA $-equivalence, the calculations were extensive and formidable, often requiring computers. Here the calculations in $\AA $-equivalence tangent spaces are transferred to $_V\KK $-equivalence tangent spaces which are much simpler.
We demonstrate the simplicity and efficiency by classifying maps under $\AA $-equivalence by using a complete transversal method for $_V\KK $-equivalence. The theory for this is given in Section~\ref{sec:ct} and a classification of certain corank $\AA _e$-codimension 2 maps from $(\C ^n,0)$ to $(\C ^{n+1},0) $ is given in Section~\ref{sec:cod2classn}.
Section~\ref{sec:countereg} gives a counterexample to a plausible statement related to the long-standing Mond conjecture.

The second author thanks FAPESP for financial support: Grant \# 08/004428-7.

\section{$\AA $-equivalence and $_V\KK $-equivalence}

In the initial definitions in this section we shall mostly assume that we are working with smooth (i.e., infinitely differentiable) maps, and will note that the theory for real analytic and complex analytic maps is similar. We shall have $\K =\R $ or $\K =\C $.
We recall the definition of $\AA $-equivalence.
\begin{definition}
Two smooth map germs $f:(\K^n,S)\to(\K^p,0)$ and $\widetilde{f}:(\K^n,\widetilde{S})\to(\K^p,0)$, with $S$ and $\widetilde{S}$ finite sets, are {\bf{$\AA $-equivalent}} if there exist diffeomorphisms $\varphi$ and $\psi$ for which the following diagram commutes.
\[\begin{CD}
(\K^n,S) @>f >> (\K^p,0)\\
@V{\varphi }VV @VV{\psi}V\\
(\K^n,\widetilde{S}) @>\widetilde{f} >> (\K^p,0).
\end{CD}\]
This is also known as {\bf{Right-Left-equivalence}}.
\end{definition}
This is obviously a natural equivalence. However classifications of maps under $\AA $-equivalence have been difficult to produce. One reason is that the `tangent space' for a map is what is called a `mixed' module.

For $_V\KK$-equivalence we shall only need to work with mono-germs and so make this restriction throughout.
\begin{definition}[\cite{matherIII}]
Let $f,\widetilde{f}:(\K^p,0)\to (\K^q,0)$ be two smooth map-germs. We say that $f$ and $\widetilde{f}$ are {\bf{$\KK $-equivalent}} if there exists a diffeomorphism
$\Psi :(\K^p\times \K^q,0\times 0) \to (\K^p\times \K^q,0\times 0)$ such that
\begin{enumerate}
\item $\Psi (\K^p \times \{ 0 \} ,0\times 0) =(\K^p \times \{ 0 \} ,0\times 0)$,
\item $\Psi ( {\rm{graph}} (f) ) =  {\rm{graph}} ( \widetilde{f} ) $.
\end{enumerate}
\end{definition}
The map $\Psi $ determines a diffeomorphism $\psi: (\K^p, 0) \to (\K^p, 0)$.

It is easy to check that $\AA $-equivalent maps are $\KK $-equivalent. Though cases where the converse is true do exist (for example stable maps \cite{matherstable}, and complex $\AA _e$-codimension one maps, \cite{codim1}) the converse is not true in general.
Thus $\KK $-equivalence is weaker than $\AA $-equivalence. It is also more developed, see \cite{gibson,matherIII}. It is much easier to work with; the related tangent space for $\KK $-equivalence is an $\EE _p$-module whereas the tangent space for $\AA $-equivalence is not.

%Mather  popularized $\KK$-equivalence within sing thy following ideas by Thom and Tougeron(contact equivalence, invented by Tougeron? Thom?) to study $\AA $-equivalence. Essentially two stable under $\AA $-equivalence are $\AA $-equivalent if they are $\KK $-equivalent.

\begin{definition}
Let $(V,0)$ be subset germ of $(\K^p,0)$ (we do not assume that it is analytic). Let $h:(\K^p,0)\to (\K^q,0)$ and $\widetilde{h}:(\K^p,0)\to (\K^q,0)$ be smooth maps. We say that $h$ and $\widetilde{h}$ are {\bf{$_V\KK$-equivalent}} if
\begin{enumerate}
\item $h$ and $\widetilde{h}$ are $\KK $-equivalent, and
\item $\psi (V) =V$ for the diffeomorphism $\psi $ determined by the $\KK $-equivalence.
%DO I NEED TO PRESERVE ORIGIN? i.e. is it a map of germs?
\end{enumerate}
\end{definition}
There are alternative ways of defining $_V\KK $-equivalence but this form will be most useful in our proofs.
%The action of $\KK $ is often described as the semi-direct product of $\RR $ and $\CC $. If we let $_V\RR $ denote the diffeomorphisms that preserve $V$, then the action of $_V\KK $ is given by the semi-direct product of $_V\RR $ and $\CC $.

Note that in the definition we do not assume that $V$ is analytic. In fact, for the classifications we have in mind $V$ is the discriminant of a stable map and hence for $\K =\R $ we can naturally have that $V$ is semi-analytic.

The concept of $_V\KK $-equivalence was introduced in \cite{rwik} to classify multi-germs from $(\C ^n,0)$ to $(\C ^{n+1},0)$ up to $\AA $-equivalence, see Theorem~2.1 of \cite{rwik}. (The notation used there is $\KK _{\cal{X}} $.) The classification in that paper was for $n=2$. The notation $_V\KK $ was later introduced by Damon in \cite{legacyiii} by analogy with his concept of $\KK _V$-equivalence which had been related to $\AA $-equivalence through a result equating $\AA _e$-codimension and $\KK _{V,e}$-codimension. See \cite{damonwark} for example.

% Earlier, Damon introduced $\KK _V$ -equivalence, \cite{damonwark}, and related it to $\AA $-equivalence through a result equating $\AA _e$-codimension and $\KK _{V,e}$-codimension. He later generalized $\KK _V$-equivalence to $_V\KK $-equivalence in \cite{legacyiii}.
%IS THIS HISTORY TRUE?
%It turns out that $\AA $-equivalence and $_V\KK $-equivalence are intimately related. What is truly great is that $_V\KK $-equivalence behaves as well as $\KK $-equivalence in that a $\KK$ tangent space is not mixed module. Thus we can use it as an easy way to study $\AA $-equivalence.

%\subsection*{Connection between $\AA $-equivalence and $_V\KK$-equivalence}

We will show that for a large collection of maps that $\AA $-equivalence and $_V\KK $-equivalence are intimately connected -- in the right setting they are `equivalent' notions.

\begin{definition}
Suppose that $F:(\K^n,S)\to (\K^p,0)$ and $h:(\K^p,0)\to (\K^q,0)$ are smooth maps. We define the {\bf{sharp pullback}} of $F$ by $h$, denoted $h^\# (F)$, to be the multi-germ given by $F|\left( (h\circ F)^{-1}(0),S\right) \to (h^{-1}(0),0)$.
%Need a name for this notion. It isn't Pushout. Total pullback. Restricted composition.
\end{definition}

%Need lemma on normalizations. Eg
%\begin{proposition}
%Suppose that $f_i:(\K ^n,S)\to (\K ^p,0)$, $i=1,2$, are smooth maps and have same discrim.
%If either
%\begin{enumerate}
%\item $n<p$, or
%\item $f_i$ are finitely $\AA $-determined and corank neq 1, or
%\item $f_i$ are stable,
%\end{enumerate}
%then $f_i$ are right-equivalent.
%\end{proposition}
%\begin{proof}
%Need to piece together from various places.
%\end{proof}
%
%NEXT THEOREM. FOR $n\geq p$ WE CAN USE THEOREM 0.8 (3) OF GAFFNEY, DUPLESSIS, WALL?
We now state the main theorem relating $\AA $-equivalence and $_V\KK $-equivalence.
\begin{theorem}
\label{mainthm}
Suppose that $F$ and $\widetilde{F}$ are $\AA $-equivalent smooth stable maps from $(\K^n,S)$ to $(\K^p,0)$, $S$ a finite set, with discriminants $V$ and $\widetilde{V}$ respectively.
Suppose that $h$ and $\widetilde{h}$ are submersions from $(\K^p,0)$ to $(\K^q,0)$
%is a submersion on the codomain of $F$
with $F$ transverse to $h^{-1}(0)$ and
$\widetilde{F}$ transverse to $\widetilde{h}^{-1}(0)$.
%These conditions ensure that $h^\#(F)$ and  $\widetilde{h}^\#(\widetilde{F})$ are maps between manifolds.

If (i) either $n<p$ or $\corank (F)\neq 1 $
and
(ii) $h^\# (F)$ and $\widetilde{h}^\# (\widetilde{F})$ are finitely $\AA $-determined, then
\[
h^\# (F) \sim _\AA \widetilde{h}^\# (\widetilde{F}) \iff
h \sim_{_V \KK } \left( \widetilde{h} \circ L \right)
\]
where $L$ is the diffeomorphism arising in $\widetilde{F}= L\circ F \circ R^{-1}$.
%  is a stable map with discriminant $V$ and that $\widetilde{F}$ is $\AA $-equivalent to $F$ by the diffeomorphisms $R$ and $L$ so that $F= L\circ F \circ R^{-1}$.
\end{theorem}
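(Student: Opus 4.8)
The plan is to reduce the general statement to the case $F=\widetilde F$ (and hence $V=\widetilde V$, $L=\mathrm{id}$), and then prove the resulting biconditional by chasing commutative diagrams that relate the source restrictions $h^\#(F)$ to the ambient data $(h,V)$. The reduction step is routine bookkeeping: if $\widetilde F = L\circ F\circ R^{-1}$ with $R,L$ diffeomorphisms, then $R$ carries $(h\circ F)^{-1}(0)$ diffeomorphically onto $((h\circ L^{-1})\circ\widetilde F)^{-1}(0)$ (after replacing $\widetilde h$ by $\widetilde h\circ L$ on the target side), so $\widetilde h^\#(\widetilde F)$ is $\AA$-equivalent to $(\widetilde h\circ L)^\#(F)$, and likewise $L(V)=\widetilde V$; thus it suffices to prove, for a single stable $F$ with discriminant $V$ and two submersions $h,h'$ transverse to $F$, that $h^\#(F)\sim_\AA (h')^\#(F)\iff h\sim_{{}_V\KK} h'$. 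I would state this reduced form as the first step of the proof.

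For the direction $h\sim_{{}_V\KK} h'\Rightarrow h^\#(F)\sim_\AA (h')^\#(F)$, the idea is that a $_V\KK$-equivalence between $h$ and $h'$ produces a diffeomorphism $\psi:(\K^p,0)\to(\K^p,0)$ with $\psi(V)=V$ and $\psi((h)^{-1}(0))=(h')^{-1}(0)$ (the latter because $\KK$-equivalent germs have the same zero-set up to the diffeomorphism $\psi$). Since $V$ is the discriminant of the stable map $F$, the key input is that a germ of diffeomorphism of the target preserving the discriminant of a stable map $F$ lifts to an $\AA$-equivalence of $F$ — this is Damon's ``lifting'' principle for stable maps (the target diffeomorphism group acting on the discriminant is the image of $\AA$), valid precisely under hypothesis (i), i.e. $n<p$ or $\corank F\neq 1$, which guarantees $F$ is a stable unfolding whose discriminant determines it. Lifting $\psi$ to diffeomorphisms $(\Phi,\psi)$ with $\psi\circ F=F\circ\Phi$, one then checks that $\Phi$ maps $(h\circ F)^{-1}(0)=F^{-1}(V\cap h^{-1}(0))$ onto $((h')\circ F)^{-1}(0)$ and $\psi$ maps $h^{-1}(0)$ onto $(h')^{-1}(0)$, giving the desired $\AA$-equivalence of the sharp pullbacks; finite $\AA$-determinacy (hypothesis (ii)) is what lets one pass from formal/infinitesimal statements to genuine germ-level equivalences.

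For the converse, suppose $\Theta=(\Phi,\psi)$ is an $\AA$-equivalence $h^\#(F)\sim_\AA (h')^\#(F)$. Here $\psi$ is only defined on the germ $(h^{-1}(0),0)$, so the main work is to extend it to an ambient diffeomorphism of $(\K^p,0)$ that both preserves $V$ and carries $h^{-1}(0)$ to $(h')^{-1}(0)$, thereby exhibiting a $_V\KK$-equivalence. This is where finite $\AA$-determinacy of the sharp pullbacks is essential: it forces the restricted diffeomorphisms to be, up to higher-order terms, restrictions of ambient ones compatible with the stable map structure, via the standard argument that an $\AA$-equivalence of restrictions of $F$ to preimages of hypersurfaces is induced by an $\AA$-equivalence of $F$ itself intertwining those hypersurfaces (again using that $V=\mathrm{Dis}(F)$ and the lifting property under (i)). Concretely I would (a) extend $\psi$ over $V$ using that $V$ near $0$ is $F(\Sigma F)$ and the source map $\Phi$ is already global on preimages, (b) then extend over all of $(\K^p,0)$ by a transversality/Thom–Levine integration argument, keeping track that the extension still maps $h^{-1}(0)$ to $(h')^{-1}(0)$ because $h,h'$ are submersions and $F$ is transverse to both.

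The hard part will be the converse direction, specifically the extension/lifting step: turning an $\AA$-equivalence defined only on the singular hypersurface-preimages $h^\#(F)$ into a genuine ambient $_V\KK$-equivalence. This requires carefully combining (i) Damon's characterization of $\AA$-equivalences of a stable $F$ in terms of diffeomorphisms preserving $\mathrm{Dis}(F)=V$, (ii) the role of hypothesis (i) in ensuring this characterization holds (the corank-one, $n\geq p$ case being genuinely exceptional), and (iii) finite $\AA$-determinacy to control the non-uniqueness of the extension and to discard higher-order discrepancies. The forward direction, by contrast, is essentially a diagram chase once the lifting principle is in hand.
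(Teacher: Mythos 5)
Your overall architecture matches the paper's: reduce to $\widetilde F=F$, $L=\mathrm{id}$, then prove the two implications separately (and your reduction via direct conjugation by $R$ and $L$ is fine). Your treatment of the implication $h\sim_{_V\KK}\widetilde h\Rightarrow h^\#(F)\sim_\AA\widetilde h^\#(F)$ is also essentially sound, though routed differently from the paper: you lift the $V$-preserving target diffeomorphism $\psi$ to an $\AA$-equivalence of the stable map $F$ itself and then restrict to the slices, whereas the paper applies the du Plessis--Gaffney--Wilson theorem (germs determined by their discriminants, under hypothesis (i) and finite determinacy) directly to the slice germs $f$ and $\widetilde f$, which have diffeomorphic discriminants. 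Both routes rest on the same du Plessis--Gaffney--Wilson result; yours applies it to $F$ (stable, hence finitely determined), the paper's to $f,\widetilde f$.

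The genuine gap is in the converse implication, $h^\#(F)\sim_\AA\widetilde h^\#(F)\Rightarrow h\sim_{_V\KK}\widetilde h$. The tool that does the work here is the uniqueness theorem for stable unfoldings (Theorem~3.1, p.~86 of Gibson--Wirthm\"uller--du Plessis--Looijenga): transversality exhibits $F$ as a stable unfolding of $f$ via the immersion $j$ parametrizing $h^{-1}(0)$ and, after composing with the given $\AA$-equivalence $(\rho,\lambda)$, as a stable unfolding of the same germ via $\widetilde j\circ\lambda$; the theorem then produces an isomorphism of unfoldings, i.e.\ ambient diffeomorphisms $(\varphi,\psi)$ with $\psi\circ F=F\circ\varphi$, and such a $\psi$ automatically preserves the discriminant $V$ and carries $h^{-1}(0)$ to $\widetilde h^{-1}(0)$ --- which is exactly a $_V\KK$-equivalence. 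Your proposed substitute --- first extend $\psi$ over $V=F(\Sigma F)$ using the source diffeomorphism, then extend to all of $(\K^p,0)$ by a Thom--Levine argument --- does not go through: the source diffeomorphism is defined only on the codimension-$q$ slice $((h\circ F)^{-1}(0),S)$, so it gives no prescription for the images of points of $V$ lying off $h^{-1}(0)$, and making that choice coherently (so that the extension commutes with $F$) is precisely the content of the unfolding-uniqueness theorem rather than something Thom--Levine integration supplies. You also misplace the hypotheses: neither (i) nor finite $\AA$-determinacy of the sharp pullbacks is used in this direction (in the paper they enter only in the other implication and in the final reduction step), so an argument leaning ``essentially'' on them here signals that the actual mechanism has not been identified.
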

\begin{proof}
First we restrict to the case that $\widetilde{F} = F$ and $L$ is the identity.

\noindent [$\Longrightarrow $] To lighten notation denote $h^\# (F)$ and $\widetilde{h}^\# (F)$ by $f$ and $\widetilde{f}$ respectively. Since $h^{-1}(0)$ and $\widetilde{h}^{-1}(0)$ are transverse to $F$ we can view $F$ as an unfolding of the two maps, that is, there exist commutative diagrams
\[
\begin{CD}
(\K ^n,S) @>F >> (\K ^p,0)\\
@A{i}AA @AA{j}A\\
(\K ^{n'},S' ) @>f >> (\K ^{p'},0)
\end{CD}
\hspace{0.5cm} {\text{ and }}
\hspace{0.5cm}
\begin{CD}
(\K ^n,\widetilde{S}) @>F>> (\K ^p,0)\\
@A{\widetilde{i}}AA @AA{\widetilde{j}}A\\
(\K ^{n'},\widetilde{S}') @>\widetilde{f} >> (\K ^{p'},0)
\end{CD}
\]
where $n'=n-q$, $p'=p-q$, the map $j$ is an immersion that parametrizes $h^{-1}(0)$,  $f$ is the usual pullback of $F$ by $j$ (i.e., not sharp pullback), $i$ is the natural (immersive) map arising from that pullback. There are similar definitions for $\widetilde{f}$.

As $f$ and $\widetilde{f}$ are $\AA $-equivalent there exist diffeomorphism germs $\rho :(\K^{n'},S')\to (\K^{n'},\widetilde{S}')$ and $\lambda :(\K^{p'},0)\to (\K^{p'},0)$ so that the following commutes
\[
\begin{CD}
(\K ^{n'},\widetilde{S}') @>\widetilde{f} >> (\K ^{p'},0)\\
@A{\rho }AA @AA{\lambda }A\\
(\K ^{n'},S') @>f >> (\K ^{p'},0).
\end{CD}
\]
By Theorem~3.1 on page 86 of \cite{gibetal} the unfoldings $(F,i,j)$ and $(F,\widetilde{i}\circ \rho ,\widetilde{j} \circ \lambda )$ are isomorphic as unfoldings and so there exist
diffeomorphism germs $\varphi :(\K^{n},S)\to (\K^{n},S)$ and $\psi :(\K^{p},0)\to (\K^{p},0)$ such that the following diagram commutes
\[
\begin{array}{ccccccc}
(\K^n,S) &  & & \stackrel{F}{\longrightarrow}  & & & (\K^p, 0) \\
 & \nwarrow i & & & & j \nearrow & \\
 \varphi \downarrow & & (\K^{n'} ,S') & \stackrel{f}{\longrightarrow} & (\K^{p'},0) & & \downarrow \psi \\
 & \swarrow \widetilde{i} \circ \rho  & & & &  \widetilde{j} \circ \lambda  \searrow & \\
 (\K^n,S) &  & & \stackrel{F}{\longrightarrow} & & & (\K^p, 0) \\
\end{array}
\]
As $j(\K^{p'})=h^{-1}(0)$ and $ \widetilde{j} \circ \psi (\K^{p'})=\widetilde{h}^{-1}(0)$ we have that $h$ and $\widetilde{h}$ are $\KK $-equivalent by a diffeomorphism involving $\psi $. Furthermore, as the diagram above commutes, $\psi $ has the property that it preserves the discriminant of $F$. That is, $h$ and $\widetilde{h}$ are $_V \KK $-equivalent as required.

\medskip

\noindent [$\Longleftarrow $]
First note that $h\sim _{_V\KK } \widetilde{h} $ implies that $h$ and $\widetilde{h} $ are $\KK $-equivalent and the induced diffeomorphism of $(\K^p,0)$, denoted $\psi $, preserves the discriminant of $F$, i.e., $\psi (V)=V$.
This $\KK $-equivalence implies that $h^{-1}(0)$ and $(\widetilde{h}\circ \psi )^{-1}(0)$ are diffeomorphic, i.e., that $h^{-1}(0)$ and $\widetilde{h}^{-1}(0)$ are diffeomorphic by $\psi $.
As $\psi $ preserves the discriminant of $F$ we have that $f$ and $\widetilde{f}$ have diffeomorphic discriminants.
We now use the results of \cite{dupgaffwil}. First note that as $F$ is an unfolding of $f$ we have that $f$ and $\widetilde{f}$ are not corank $1$ if $F$ is not. By assumption $f$ and $\widetilde{f}$ are finitely $\AA $-determined. Then, by Theorem~0.6(2a), Theorem~0.8(3) of \cite{dupgaffwil} and the discussion following their Theorem~0.8, the maps $f$ and $\widetilde{f}$ are right-equivalent. Hence, they are $\AA $-equivalent as required.

\medskip
\noindent[Final step] All that is left to do now is to generalize to the statement of the theorem.
From the preceding we know that
\[
h^\# (F) \sim _\AA (\widetilde{h}\circ L )^\# (F) \iff
h \sim_{_V \KK } \left( \widetilde{h} \circ L \right)   .
\]
Obviously $(\widetilde{h}\circ L)^{-1}(0)$ is mapped diffeomorphically to $\widetilde{h}^{-1}(0)$ by $L$. Also the maps $(\widetilde{h}\circ L )^\# (F) $ and $\widetilde{h}^\# (\widetilde{F}) $ have diffeomorphic discriminants by the diffeomorphism $L| (\widetilde{h}\circ L)^{-1}(0)$. Again due to the same results in \cite{dupgaffwil}
%uniqueness of normalizations
we produce an $\AA $-equivalence between
$(\widetilde{h}\circ L )^\# (F)$ and $\widetilde{h}^\# (\widetilde{F})$.
\end{proof}

%%%%%%%%%%%%%%%%%%%%%%%%%%%%%%%%%%
%
%
%  COMPLETE TRANSVERSAL SECTION
%
%
\section{Complete transversals}
\label{sec:ct}

Let $\EE _p$ denote the set of germs of smooth functions on $(\K ^p,0)$. For $\K =\C$ with analytic functions this is often denoted by $\OO _p$.
The maximal ideal of $\EE _p$ will be denoted $\M _p$ or $\M $ when we can drop the $p$ without ambiguity. For a smooth map $h:(\K ^p,0)\to (\K ^q,0)$ the vector fields along $h$ are denoted by $\theta (h)$. This is isomorphic as an $\EE _p$-module to $\EE _p^q$, i.e., $q$ copies of $\EE ^p$.

For $V$ a subset germ of $(\K ^p,0)$ let $\Theta _V$ be the vector fields that integrate to give a diffeomorphism of $(\K ^p,0)$ that preserves $V$. %GIVE A NAME.
Obviously $\Theta _V$ is an $\EE _p$-module with the natural structure.

We can define in the standard way a `tangent space' even though $_V\KK$ is not a Lie group.
\begin{definition}
The {\bf{$_V \KK$-tangent space of $h$ with respect to $V$}} is
\[
T_V \KK  (h) =
\{ \xi (h) \, | \, \xi \in \Theta _V \cap \M  _p\theta (h) \} + h^*(\M _q)\theta (h).
\]
\end{definition}
To produce a complete transversal result we will use diffeomorphisms with $1$-jet equal to the identity.

We let $_V\KK _1$ denote the subgroup of $_V\KK $ consisting of $\KK $-equivalences that have the diffeomorphism preserving $V$ having $1$-jet equal to the identity.

Let $\Theta _{V,1}$ be the set of all vector fields that integrate to diffeomorphisms with $1$-jet the identity and preserve $V$.

\begin{definition}
The {\bf{$_V \KK_1$-tangent space of $h$ with respect to $V$}} is
\[
T_V \KK _1  (h) =
\{ \xi (h) \, | \, \xi \in \Theta _{V,1} \} + \M _p h^*(\M _q)\theta (h).
\]
\end{definition}

\begin{theorem}[Complete transversal theorem for $_V \KK $-equivalence]
\label{thm:ct}
Suppose that $h:(\K ^p,0)\to (\K ^q,0)$ is a smooth map and $V$ is a subgerm of $(\K ^p,0)$ such that $\Theta _V$ a finitely generated $\EE _p$-module.

If $g_1, \dots ,g_s$ are homogeneous polynomial maps of degree $k+1$ such that
\[
\M ^{k+1}\theta (h)\subseteq
T_V\KK _1 (h) +{\rm{span}} \{ g_1, \dots , g_s \} + \M ^{k+2}\theta (h) ,
\]
then every $g$ with $j^{k}(h)=j^k(g)$ is $_V \KK _1$-equivalent to some $f$ where $j^{k+1}(f)$ is of the form
$j^k(h)+ \sum_{i=1}^s \alpha _i g_i $, $\alpha _i\in \K $.
\end{theorem}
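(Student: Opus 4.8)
The plan is to follow the standard route for complete transversal theorems: pass to the jet space, where $_V\KK_1$ acts through a finite-dimensional Lie group, observe that the relevant tangent spaces are constant along the ``$j^k=j^k(h)$'' fibre, and finish with a Thom--Levine integration along a straight line. Since the conclusion only concerns $j^{k+1}(f)$, and the $(k+1)$-jet of a $_V\KK_1$-equivalence applied to a germ is determined by the $(k+1)$-jets of the equivalence and of the germ, it is enough to argue in $J:=J^{k+1}((\K^p,0),(\K^q,0))$. On $J$ the group $_V\KK_1$ acts through a finite-dimensional unipotent Lie group $G$, namely its $(k+1)$-jets: the jets of source diffeomorphisms with $1$-jet the identity, and of $\KK$-multipliers equal to $I$ at $0$, each form a unipotent group, and the requirement that the source diffeomorphism preserve $V$ cuts out a closed subgroup precisely because $\Theta_V$ is, by hypothesis, a finitely generated $\EE_p$-module, so that $\mathfrak g:=j^{k+1}(\Theta_{V,1})$ is a finite-dimensional Lie algebra whose members integrate (by definition) to $V$-preserving diffeomorphisms. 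For $z\in J$ the orbit $G\cdot z$ is an immersed submanifold with $T_z(G\cdot z)$ equal to the image $\tau(z)\subseteq J$ of $T_V\KK_1(z)$.

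Next I would establish the point that makes a one-step normalisation possible: $T_V\KK_1(z)\equiv T_V\KK_1(h)\pmod{\M^{k+2}\theta(h)}$ for every germ $z$ with $j^k(z)=j^k(h)$. Indeed $\xi(z)-\xi(h)=\xi(z-h)$ with $\xi$ vanishing to order $2$ (being in $\Theta_{V,1}$) and $z-h$ of order $\geq k+1$, so differentiation drops one degree and $\xi(z-h)\in\M^{k+2}\theta(h)$; and $z$ and $h$ have the same components modulo $\M^{k+1}$, whence $\M_p z^*(\M_q)\theta(z)$ and $\M_p h^*(\M_q)\theta(h)$ agree modulo $\M^{k+2}\theta(h)$. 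Consequently the image $\tau(z)$ is a single fixed subspace $\tau\subseteq J$ for all $z$ in the affine fibre $\mathcal F:=\{z\in J:j^k(z)=j^k(h)\}$, and, reading the hypothesis inside $J$ where $\M^{k+2}\theta(h)$ vanishes, we get $N\subseteq\tau+{\rm span}\{g_1,\dots,g_s\}$, where $N\subseteq J$ is the space of homogeneous maps of degree $k+1$ (the model vector space of $\mathcal F$).

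Then comes the integration step. Fix $g$ with $j^k(g)=j^k(h)$ and write $j^{k+1}(g)=j^k(h)+v$ with $v\in N$. By the inclusion just obtained, choose $\alpha_i\in\K$ and $w\in\tau\cap N$ with $v=w+\sum_{i=1}^s\alpha_ig_i$ in $J$. Consider the affine path $z_t:=j^{k+1}(g)-tw\in\mathcal F$ for $t\in[0,1]$, so $z_0=j^{k+1}(g)$ and $z_1=j^k(h)+\sum_{i=1}^s\alpha_ig_i$. Then $\dot z_t=-w\in\tau=\tau(z_t)=T_{z_t}(G\cdot z_t)$ for every $t$; since this tangent space is literally the same subspace $\tau$ throughout, a smooth lift of $\dot z_t$ to $\mathfrak g$ exists, and the standard Thom--Levine integration lemma yields $\phi\in G$ with $\phi\cdot z_0=z_1$. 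Lifting $\phi$ to some $\Phi\in{}_V\KK_1$ with $j^{k+1}(\Phi)=\phi$ and putting $f:=\Phi\cdot g$, we obtain $f\sim_{{}_V\KK_1}g$ with $j^{k+1}(f)=\phi\cdot j^{k+1}(g)=z_1=j^k(h)+\sum_{i=1}^s\alpha_ig_i$, which is the required form.

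I expect the main obstacle to be the first two steps rather than the last: correctly realising $_V\KK_1$ as a finite-dimensional unipotent Lie group acting on $J$ with orbit tangent spaces $\tau(\cdot)$ — this is exactly where the finite generation of $\Theta_V$ is used, and where the possible non-analyticity of $V$ must be accommodated — together with the verification that $T_V\KK_1(\cdot)$ is constant modulo $\M^{k+2}\theta(h)$ along $\mathcal F$, which is the linchpin permitting a single homotopy. Granting these, the integration lemma applies verbatim and all the classification-theoretic content is contained in the hypothesised inclusion.
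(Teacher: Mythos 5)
Your proposal is correct and follows essentially the same route as the paper: the paper simply cites Proposition~1.3 of Bruce--Kirk--du~Plessis for the jet-level integration step that you carry out explicitly via the Thom--Levine argument, and the decisive verification in both cases is the same, namely that elements of $T_V\KK_1$ have components in $\M_p^2$, so the infinitesimal action (equivalently, the orbit tangent space) is unchanged modulo $\M^{k+2}\theta(h)$ when a degree $k+1$ term is added. No substantive difference to report.
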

\begin{proof}
The crucial parts that we require from Proposition~1.3 of \cite{bdpk} and the subsequent equation there labelled (3) can be summarized as the following:
Let $G$ be a Lie group acting smoothly on the vector space $A$ and let $W$ be a subspace of $A$
such that $l\cdot (x+w) = l\cdot x $ for all $x\in A$, $w\in W$ and $l\in TG$. ($TG$ is the tangent space.) Then,
if $x_0\in A$ and $T$ is a vector subspace of $W$ satisfying $W\subseteq T+TG\cdot x_0$, then for any $w\in W$ there exists $g\in G$, $t\in T$ such that $g\cdot (x_0+w) = x_0+t$.

Obviously $_V\KK _1$ is not a Lie group but by working with jets we have a Lie group and can apply the preceding proposition. Hence we take $G=J^{k+1}(_V\KK _1)$, the $k+1$ jet space of the $\KK$-equivalences preserving $V$; $A=J^{k+1}(p,q)$, the set of polynomial mappings of degree less than or equal to $k+1$ with zero constant term;
% see NATO paper p6, p12,
 $W$ is the subspace of homogeneous maps of degree $k+1$ and $T={\rm{span}} \{ g_1, \dots , g_s \}$. It is easy to show that $l\cdot (x+w) = l \cdot x $ for all $x\in A$, $w\in W$ and $l \in T_V\KK _1$. The key is that $l\in (\M_p^2)^q$, i.e., the components of $l$ are in the square of the maximal ideal. See \cite{bruce-nato} page 22.
 % See p22 of NATO. Key is g in m^2. Do I utilize fin gen at this point?
 % We then let $x_0=h$?
%From this the result follows.
\end{proof}
\begin{definition}
The set $\{ g_1 , g_2 ,\dots ,g_s\}$ is called a {\bf{complete transversal}} of degree $k+1$. Often we call this a $(k+1)$-complete transversal.
\end{definition}
An important part of classification is to make the complete transversal as small as possible.
%DIMCA+GIBSON paper?

\begin{definition}
The {\bf{extended $_V \KK$-tangent space of $h$ with respect to $V$}} is
\[
T_V \KK _e (h) =\{ \xi (h) \, | \, \xi \in \Theta _V \}  + h^*(\M _q)\theta (h).
\]
The {\bf{$_V \KK_e$-codimension of $h$ with respect to $V$}} is
\[
_V \KK_e -\cod (h) = \dim _\K \dfrac{\theta (h)}{T_V \KK _e(h)} .
\]
\end{definition}

We now deduce determinacy results from the complete transversal theorem. (See also Corollary~3.12 of \cite{bruce-west}.)
\begin{theorem}
\label{thm:determinacy}
Suppose that $\Theta _V$ is finitely generated as an $\EE _p$-module.
\begin{enumerate}
\item If
\[
\M ^{l+1}\theta (h)\subseteq
T_V\KK _1 (h) + \M ^{l+2}\theta (h) ,
\]
then $h$ is $l-_V\KK $-determined.
\item  If the vectors in $\Theta _V$ vanish at origin, and
\[
\M ^{l}\theta (h)\subseteq
T_V\KK _e (h) + \M ^{l+1}\theta (h) ,
\]
then $h$ is $l-_V\KK $-determined.
\end{enumerate}
\end{theorem}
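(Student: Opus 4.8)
The plan is to establish (1) directly from the complete transversal theorem, Theorem~\ref{thm:ct}, and then to reduce (2) to (1).

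\emph{Part (1).} Assume $\M^{l+1}\theta(h)\subseteq T_V\KK_1(h)+\M^{l+2}\theta(h)$. I would first bootstrap this to every higher degree. The point is that, although $T_V\KK_1(h)$ is not an $\EE_p$-module, it is stable under multiplication by $\M_p$: if $\xi\in\Theta_{V,1}$ then $\xi$ fixes the origin, so its components lie in $\M_p$, and for $m\in\M_p$ the field $m\xi$ has components in $\M_p^2$; differentiating the flow equation at the origin shows that a vector field whose components lie in $\M_p^2$ has time-one flow with $1$-jet the identity, while $m\xi$ still integrates to a diffeomorphism preserving $V$ because $\Theta_V$ is an $\EE_p$-module, so $m\xi\in\Theta_{V,1}$. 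Since also $\M_p\cdot\M_p h^*(\M_q)\theta(h)\subseteq\M_p h^*(\M_q)\theta(h)$, this gives $\M_p\cdot T_V\KK_1(h)\subseteq T_V\KK_1(h)$. Multiplying the hypothesis by $\M_p$ and inducting on $k$ then yields $\M^{k+1}\theta(h)\subseteq T_V\KK_1(h)+\M^{k+2}\theta(h)$ for every $k\geq l$.

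For each such $k$, apply Theorem~\ref{thm:ct} with an empty complete transversal ($s=0$): every $g$ with $j^k(g)=j^k(h)$ is $_V\KK_1$-equivalent to some $f$ with $j^{k+1}(f)=j^k(h)$. Starting from an arbitrary $g$ with $j^l(g)=j^l(h)$ and iterating, $g$ is carried by $_V\KK_1$-equivalences to maps agreeing with $h$ to arbitrarily high order. Assembling this chain of equivalences into a single $_V\KK$-equivalence between $g$ and $h$ is done by the standard argument for unipotent classification groups, exactly as in the proof of Corollary~3.12 of \cite{bruce-west} and in \cite{bdpk}. Hence $h$ is $l$-$_V\KK$-determined.

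\emph{Part (2).} Since $\Theta_V$ is a finitely generated $\EE_p$-module and $\xi\mapsto\xi(h)$ is $\EE_p$-linear, $T_V\KK_e(h)$ is a finitely generated $\EE_p$-submodule of $\theta(h)$; as $\M^l\theta(h)$ is also finitely generated over the local ring $\EE_p$, intersecting the hypothesis $\M^l\theta(h)\subseteq T_V\KK_e(h)+\M^{l+1}\theta(h)$ with $\M^l\theta(h)$ and applying Nakayama's lemma yields $\M^l\theta(h)\subseteq T_V\KK_e(h)$. Because every vector field in $\Theta_V$ vanishes at the origin, the maximal-ideal condition in the definition of $T_V\KK(h)$ is vacuous, so $T_V\KK(h)=T_V\KK_e(h)$ and hence $\M^l\theta(h)\subseteq T_V\KK(h)$. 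Multiplying by $\M_p$: each generator of $\M_p\cdot\{\xi(h)\mid\xi\in\Theta_V\}$ equals $(m\xi)(h)$ with $m\xi\in\M_p\Theta_V\subseteq\Theta_V$ and components in $\M_p^2$, hence $m\xi\in\Theta_{V,1}$ by the flow computation from Part~(1); and $\M_p\cdot h^*(\M_q)\theta(h)=\M_p h^*(\M_q)\theta(h)$. Thus $\M^{l+1}\theta(h)\subseteq T_V\KK_1(h)$, a fortiori contained in $T_V\KK_1(h)+\M^{l+2}\theta(h)$, so Part~(1) applies and $h$ is $l$-$_V\KK$-determined.

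\emph{Main obstacle.} The only step that is not formal module or flow bookkeeping is the passage, in Part~(1), from ``$g$ equivalent to $h$ modulo every power of $\M_p$'' to an actual $_V\KK$-equivalence between $g$ and $h$ --- that is, assembling the infinite sequence of $_V\KK_1$-equivalences produced by iterating Theorem~\ref{thm:ct} into one convergent equivalence. This is the usual completeness question for the unipotent group $_V\KK_1$; it is handled precisely as in \cite{bdpk,bruce-west}, and in the complex- or real-analytic setting can alternatively be deduced from an Artin-type approximation theorem.
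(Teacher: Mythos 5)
Your proposal is correct, and Part (2) follows essentially the paper's own reduction: identify $T_V\KK_e(h)$ with $T_V\KK(h)$ using the vanishing of the fields at the origin, multiply by $\M_p$, observe $\M_p\, T_V\KK(h)\subseteq T_V\KK_1(h)$, and invoke Part (1); your preliminary Nakayama step to clean off the $\M^{l+1}\theta(h)$ error term is harmless but not needed, since the paper simply carries the error term into the hypothesis of Part (1). Where you genuinely diverge is in Part (1). The paper's argument is shorter: apply Nakayama's Lemma once to upgrade the hypothesis to the clean inclusion $\M^{l+1}\theta(h)\subseteq T_V\KK_1(h)$, and then cite the unipotent determinacy theorem (Theorem 2.5 of Bruce--du Plessis--Wall, adapted to $_V\KK$), which does all the work in one step. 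You instead keep the error term, bootstrap the inclusion to every degree $k\geq l$ using the (correct, and worth making explicit) observation that $T_V\KK_1(h)$ is closed under multiplication by $\M_p$, iterate Theorem~\ref{thm:ct} with an empty transversal, and then appeal to the standard assembly argument for unipotent groups. Both routes ultimately defer the same hard analytic content --- passing from jet-level statements to a germ-level equivalence --- to the same literature (\cite{bdpk}, \cite{bdpw}, Corollary~3.12 of \cite{bruce-west}), and you correctly flag this as the one non-formal step. Your version buys a self-contained derivation of the jet-level statements from the complete transversal theorem already proved in the paper, at the cost of a longer chain; the paper's version buys brevity by applying Nakayama up front so that the cited determinacy theorem can be used in its standard form.
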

\begin{proof}
(i) By Nakayama's Lemma we deduce that $\M ^{l+1}\theta (h)\subseteq
T_V\KK _1 (h) $. Then from Theorem~2.5 of \cite{bdpw} (adapted for $_V\KK $-equivalence) we deduce that $h$ is $l-_V\KK _1$-determined and hence $l-_V\KK $-determined.

(ii) We have that $T_V\KK _e(h)=T_V\KK (h)$ since the vector fields vanish at the origin. Hence by multiplication of the equation in the assumption by $\M \theta (h)$ we have
\[
\M ^{l+1}\theta (h)\subseteq \M T_V\KK  (h) + \M ^{l+2}\theta (h) .
\]
Since $\M T_V\KK (h)\subseteq T_V\KK _1 (h)$ we can apply (i) to reach the desired conclusion.
\end{proof}
Note that the corresponding theorem for $\AA $-equivalence would, for example, be that $l$-determinacy requires investigation of degree $2l+1$ functions rather than degree $l$. It is partly this difference that means use of $_V\KK $-equivalence is simpler than that of $\AA $-equivalence.

Following the usual convention in singularity theory we say that a map is $_V\KK $-finite if its $_V\KK _e$-codimension is finite.
When using $_V\KK$-equivalence in an $\AA $-equivalence classification it is useful to know the following.

\begin{proposition}
Suppose that $F:(\K ^n,0)\to (\K ^p,0)$ is a smooth stable map and $h:(\K ^p,0)\to (\K ^q,0)$ is a smooth submersion with $h$ transverse to $F$. Then $h$ is $_V\KK $-finite if and only if $h^\#(F)$ is $\AA $-finite.
\end{proposition}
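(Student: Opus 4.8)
The plan is to establish both implications by relating the $_V\KK_e$-codimension of $h$ to the $\AA_e$-codimension of $f = h^\#(F)$, using the setup from the proof of Theorem~\ref{mainthm}. Recall that since $h$ is a submersion transverse to $F$, we can view $F$ as an unfolding of $f$: there is an immersion $j$ parametrizing $h^{-1}(0)$, the map $f$ is the ordinary (non-sharp) pullback of $F$ by $j$, and $V = \dis(F)$ restricts to $\dis(f)$ under $j$. The essential point is that both finiteness conditions can be characterized geometrically in terms of the same object, namely the locus where $h^{-1}(0)$ fails to be transverse to $V$ away from the origin.

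First I would treat the forward direction. Suppose $h$ is $_V\KK$-finite. I would argue that the zero set of $T_V\KK_e(h)$ in $\theta(h)$ having finite colength forces $h^{-1}(0)$ to meet $V$ transversally at every point of a punctured neighbourhood of $0$ in $(\K^p,0)$; equivalently, the only instability of $f = h^\#(F)$ occurs at the origin, so $f$ is a mono-germ (multi-germ) with isolated instability, which for a stable unfolding $F$ is exactly the condition of being $\AA$-finite (finitely $\AA$-determined in the nice dimensions, or at least $\AA_e$-codimension finite). Conversely, if $h^\#(F)$ is $\AA$-finite then its instability locus is the origin, which pushes back through $j$ and the unfolding structure of $F$ to say that $h^{-1}(0)$ is transverse to $V$ off $0$; this transversality, together with finite generation of $\Theta_V$, gives that $\M^l\theta(h) \subseteq T_V\KK_e(h)$ for some $l$ on the punctured neighbourhood, and then a Nakayama/finiteness argument at the origin upgrades this to finite $_V\KK_e$-codimension.

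The cleanest way to organize both directions is probably to invoke Damon's theorem equating $\AA_e$-codimension with $\KK_{V,e}$-codimension (cited in the excerpt as the Damon--Mond--Wik circle of results, e.g.\ \cite{damonwark}, \cite{legacyiii}) together with the identification of $_V\KK$-equivalence with the relevant version of $\KK_V$-equivalence in this unfolding context; then finiteness of one codimension is literally equivalent to finiteness of the other. Alternatively, one can run a direct geometric argument using the theory of discriminants: $h$ is $_V\KK$-finite $\iff$ $V \cap h^{-1}(0) \setminus \{0\}$ is smooth and $h^{-1}(0)$ is transverse there to the strata of $V$ $\iff$ the multi-germ $h^\#(F)$ is stable off the origin $\iff$ $h^\#(F)$ is $\AA$-finite (by Mather–Gaffney).

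The main obstacle I anticipate is making the "transversality off the origin" characterization of $_V\KK$-finiteness precise and correctly matched to the standard "stable off the origin" characterization of $\AA$-finiteness, i.e.\ checking that the algebraic finiteness condition $\dim_\K \theta(h)/T_V\KK_e(h) < \infty$ really is equivalent to the geometric non-transversality locus being $\{0\}$ — this requires knowing that $\Theta_V$ generates the tangent directions to all strata of $V$, which in turn uses that $V$ is a free divisor (the discriminant of a stable map, by Looijenga/Damon) so that $\Theta_V = \derlog{V}$ behaves well. Handling the real (semi-analytic) case, where $V$ need not be analytic, may require either restricting to $\K = \C$ or appealing to the structure of discriminants of stable maps in the real-analytic setting.
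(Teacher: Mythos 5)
Your second, ``cleanest'' route is essentially the proof the paper gives: the paper disposes of the proposition in two sentences, citing Theorem~3.4 of \cite{augsmooth} for the general smooth case and, as an alternative for $\K=\C$, exactly the combination you propose --- Damon's equivalence of $\AA_e$- and $\KK_{V,e}$-codimension (Proposition~2.3 of \cite{damonwark}) together with Lemma~6.2 of \cite{legacyiii}, which identifies $\KK_{V,e}$-codimension of the pullback with the $_V\KK_e$-codimension of $h$. So on that branch you and the paper agree. Your first, direct geometric argument is a genuinely different route and is plausible, but it is also where the real work would lie, and you have correctly flagged the two soft spots yourself: (1) the equivalence between finite $_V\KK_e$-codimension and transversality of $h^{-1}(0)$ to $V$ off the origin requires the logarithmic stratification of the discriminant to be holonomic (Damon), not that $V$ be a free divisor --- for $p>n+1$ the image of a stable map is not a divisor at all, so the freeness framing is the wrong hypothesis; and (2) the Mather--Gaffney characterisation ``$\AA$-finite $\iff$ stable off the origin'' is clean only for $\K=\C$; in the real smooth setting, where $V$ is merely semi-analytic, this is precisely the gap that the citation to \cite{augsmooth} is there to fill. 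As written, your geometric sketch would therefore constitute a complete proof only over $\C$, and even there it reproves Damon's results rather than shortening anything; the codimension-equality route is both shorter and what the paper intends.
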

\begin{proof}
This follows from Theorem~3.4 of \cite{augsmooth}. An alternative proof in the case of $\K =\C$ is to use Proposition~2.3 of \cite{damonwark} with Lemma~6.2 of \cite{legacyiii}.
\end{proof}

%%%%%%%%%%%%%%%%%%%%%%%%%%%%%%%%%%%%%%%%%%%%%%%%%%%%%%%%%%%%%%%%%%%%%%%%%%%%%%%

%%%%%%%%%%%%%%%%%%%%%%%%%%%%%%%%%%%%%%%%%%%%%%%%%%%%%%%%%%%%%%%%%%%%%%%%%%%%%%%

\section{Classification}
\label{sec:cod2classn}
In this section we shall assume for simplicity that $\K =\C$ and that we are dealing with analytic maps.
\begin{theorem}[Damon]
\label{thm:damon-main}
Suppose that $F:(\C ^n,S)\to (\C ^p,0)$ is stable map and that $h:(\C ^p,0)\to (\C ^q,0)$ is an analytic map.
Then,
\[
\AA _e - \cod (h^\# (F)) = _V\KK _e-\cod (h) .
\]
\end{theorem}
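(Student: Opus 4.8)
The plan is to reduce this equality of codimensions to the known comparison results in the literature, which relate the $\AA_e$-codimension of a map to the $\KK_V$-codimension of an associated object, and then to identify $_V\KK_e$-codimension (as defined in this paper) with that $\KK_V$-codimension. First I would observe that, since $F$ is stable, the sharp pullback $h^\#(F)$ is, up to $\AA$-equivalence, exactly the pullback of $F$ along a parametrization of $h^{-1}(0)$ — this is the unfolding picture already used in the proof of Theorem~\ref{mainthm}. Thus $h^\#(F)$ fits the setup of Damon's theory of sections of the discriminant: the $\AA_e$-codimension of a map pulled back from a stable map $F$ along a map $g$ into the target equals the $\KK_{V,e}$-codimension of $g$, where $V = \dis(F)$ (cf.\ \cite{damonwark}, and in the smooth case \cite{augsmooth}, Theorem~3.4).

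Next I would match up the two notions of codimension on the $\KK$ side. The subtlety is that Damon's $\KK_{V,e}$-tangent space is built from $\Theta_V = \derlog{V}$, the module of vector fields tangent to $V$, acting on $\theta(g)$, together with $tg(\theta_p)$ and $g^*(\M_q)\theta(g)$ — whereas the $_V\KK_e$-tangent space defined here is $\{\xi(h)\mid \xi\in\Theta_V\} + h^*(\M_q)\theta(h)$. These are literally the same formula once one notes that here $h$ plays the role of the section $g$, and that $h$ being a submersion means $F$ transverse to $h^{-1}(0)$, so the relevant liftable vector fields of $F$ generate $\Theta_V$ in the required sense. So the heart of the argument is: (a) invoke the $\AA_e$ versus $\KK_{V,e}$ codimension equality for sections of stable discriminants; (b) check that $_V\KK_e$-codimension as defined in Section~\ref{sec:ct} coincides with $\KK_{V,e}$-codimension.

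The step I expect to be the main obstacle is (b), specifically handling the case $n \geq p$ where $V$ may be all of $(\C^p,0)$ or otherwise degenerate, and making sure the bookkeeping of which module of vector fields ($\Theta_V$ versus $\derlog{V}$) is used is consistent — Damon's results are usually stated with $V$ a free divisor or at least a hypersurface with well-behaved $\derlog{V}$, while here $V$ is only assumed to be the discriminant of a stable map. For $n < p$ the discriminant is a genuine hypersurface and the free-divisor/holonomic machinery applies cleanly; for $n \geq p$ one must instead argue directly, or cite the more general smooth-category result \cite{augsmooth} which is designed to cover exactly this. I would therefore structure the proof as: reduce to the pullback description of $h^\#(F)$; apply Theorem~3.4 of \cite{augsmooth} (or, for $\C$, Proposition~2.3 of \cite{damonwark} together with Lemma~6.2 of \cite{legacyiii}) to get $\AA_e\text{-}\cod(h^\#(F)) = \KK_{V,e}\text{-}\cod(h)$; and finish by the identification of tangent spaces showing $\KK_{V,e}\text{-}\cod(h) = {}_V\KK_e\text{-}\cod(h)$.
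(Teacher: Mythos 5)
Your proposal follows essentially the same route as the paper: the paper's proof is precisely (1) replace $h^\#(F)$ by the usual pullback of $F$ along an immersion $g$ parametrizing $h^{-1}(0)$, (2) apply the main theorem of \cite{damonwark} to get $\AA_e\hbox{-}\cod = \KK_{V,e}\hbox{-}\cod(g)$, and (3) cite Lemma~6.2 of \cite{legacyiii} for the identification $\KK_{V,e}\hbox{-}\cod(g) = {}_V\KK_e\hbox{-}\cod(h)$, which is exactly your step (b). The only difference is that the paper delegates your ``main obstacle'' entirely to that lemma rather than carrying out the tangent-space comparison itself.
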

\begin{proof}
The sharp pullback of $F$ by $h$ is $\AA $-equivalent to the usual pullback of $F$ by an immersion $g$ with $h^{-1}(0)$ equal to the image of $g$. Hence by the main theorem of \cite{damonwark} we have that the $\AA _e$-codimension of $h^\# (F)$ and $\KK _{V,e}$-codimension of the pullback of $F$ by $g$ are equal. The latter is equal to $_V\KK _e-\cod (h)$ by Lemma~6.2 of \cite{legacyiii}.
\end{proof}
% Could also use Bruce and Tari, Square matrices, Propn a.1 p 759.

%Now use $\K =\C$ or use it above as well. For simplicity.

\begin{definition}
For $k\geq 2$ the {\bf{minimal cross cap mapping of multiplicity $k$}} is the map $\varphi _k:(\C^{2k-2},0)\to(\C^{2k-1},0)$ given by
\begin{eqnarray*}
&&\varphi _k(u_1,\ldots ,u_{k-2},v_1,\ldots ,v_{k-1},y)\\
&&\qquad\qquad\qquad\qquad=\left(u_1,\ldots ,u_{k-2},v_1,\ldots ,v_{k-1},y^k+\sum_{i=1}^{k-2}u_iy^i,\sum_{i=1}^{k-1}v_iy^i\right)
\end{eqnarray*}
We shall label the coordinates of the target $U_1,\ldots ,U_{k-2},V_1,\ldots ,V_{k-1},W_1$ and $W_2$, respectively.
The sets of coordinates will be abbreviated to $\underline{U}$, $\underline{V}$ and $\underline{W}$ respectively.
\end{definition}
\begin{remark}
Any corank $1$ stable map from $(\C ^n,0)$ to $(\C ^{n+1},0)$ is $\AA $-equivalent to the trivial unfolding of a map of the form $\varphi _k$ for some $k$. Any stable map that is equivalent to some $\varphi _k$ we shall call {\bf{minimal}}.
\end{remark}
We shall now describe the vector fields tangent to the image of $\varphi _k$. Since $\varphi _k$ is quasihomogeneous the Euler vector field
\begin{displaymath}\X_e=\lt(\begin{array}{c}
(k-1)U_1\\
(k-2)U_2\\
\vdots\\
2U_{k-2}\\
(k-1)V_1\\
(k-2)V_2\\
\vdots\\
V_{k-1}\\
kW_1\\
kW_2
\end{array}\rt)\end{displaymath}
is tangent to the image of $\varphi _k$.

In \cite{liftables} it is shown that there are three families of mappings that are tangent to the image of $\varphi _k$.
We shall denote the members of the families by $\xi ^f_j$ where $1\leq f\leq 3$ and $1\leq j\leq k-1$.
This can be written in component form as
\[
\xi ^f_j=
\left(
\begin{array}{c}
A_{1,j}^f\\
\vdots\\
A_{k-2,j}^f\\
B_{1,j}^f\\
\vdots\\
B_{k-1,j}^f\\
C_{1,j}^f\\
C_{2,j}^f
\end{array}
\right) .
\]
That is, the entries of $\xi ^f_j$ that correspond to coordinates $U_1,\dots U_{k-2}$ are labelled with $A_1, \dots , A_{k-2}$, the entries that correspond to coordinates $V_1,\dots V_{k-1}$ are labelled with $B_1,\dots , B_{k-2}$, and
the entries that correspond to coordinates $W_1$ and $W_2$ are labelled with $C_1$ and $C_2$ respectively.

%Introducing `dummy' variables in $\underline{U}$ and $\underline{V}$ allows a very succinct description of the liftable vector fields.

\begin{theorem}[\cite{liftables}]
\label{firstfamily}

Define $U_{k-1}=V_k=0$, $U_k=1$ and $U_r=V_r=0$ for $r\le0$ and for $r>k$

There are three families of vector fields tangent to the image of $\varphi _k$. For $1\leq j \leq k-1$ the vector fields  are given by the following components.

First family:
\begin{eqnarray*}
A_{i,j}^1&=&(k-i)(k-j)U_iU_j   , \qquad 1\le i\le k-2,\\
B_{i,j}^1&=&k\sum_{r=1}^{i-1}U_{i+j-r}V_r-k\sum_{r=1}^iU_rV_{i+j-r}-(i-1)(k-j)U_jV_i\\
& & +\,kV_{i+j}W_1-kU_{i+j}W_2,  \qquad 1\le i\le k-1, \\
C_{1,j}^1&=&k(k-j)U_jW_1,\\
C_{2,j}^1&=&-kV_jW_1+(k-j)U_jW_2.
\end{eqnarray*}
Second family:
\begin{eqnarray*}
A_{i,j}^2&=&-k(k+i-j+1)U_{k+i-j+1}W_1+k\sum_{r=1}^i(k+i-j-2r+1)U_rU_{k+i-j-r+1}\\
& &-j(i+1)U_{i+1}U_{k-j}, \qquad 1\le i\le k-2,\\
B_{i,j}^2&=&-k(k+i-j+1)V_{k+i-j+1}W_1+k\sum_{r=1}^i(k+i-j-r+1)U_rV_{k+i-j-r+1}\\
& &-k\sum_{r=1}^irU_{k+i-j-r+1}V_r-j(i+1)U_{k-j}V_{i+1} , \qquad 1\le i\le k-1, \\
C_{1,j}^2&=&k(k-j+1)U_{k-j+1}W_1+jU_1U_{k-j},\\
C_{2,j}^2&=&k(k-j+1)V_{k-j+1}W_1+jV_1U_{k-j} .
\end{eqnarray*}
Third family:
\begin{eqnarray*}
A_{i,j}^3&=&-k(k+i-j+1)U_{k+i-j+1}W_2+k\sum_{r=1}^i(k+i-j-r+1)U_{k+i-j-r+1}V_r\\
&&-k\sum_{r=1}^irU_rV_{k+i-j-r+1}-k(i+1)U_{i+1}V_{k-j}, \qquad 1\le i\le k-2,\\
B_{i,j}^3&=&-k(k+i-j+1)V_{k+i-j+1}W_2+k\sum_{r=1}^i(k+i-j-2r+1)V_rV_{k+i-j-r+1}\\
&&-k(i+1)V_{i+1}V_{k-j}, \qquad 1\le i\le k-1, \\
C_{1,j}^3&=&k(k-j+1)U_{k-j+1}W_2+kU_1V_{k-j}\\
C_{2,j}^3&=&k(k-j+1)V_{k-j+1}W_2+kV_1V_{k-j}.
\end{eqnarray*}
\end{theorem}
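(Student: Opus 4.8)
The plan is to verify each vector field in the list is tangent to the image $V$ of $\varphi_k$ by exhibiting an explicit lift of it over $\varphi_k$. If $\xi$ is a germ of vector field on the target and $\eta$ is a germ of vector field on the source with $d\varphi_k\circ\eta=\xi\circ\varphi_k$, then $\varphi_k$ intertwines the flows of $\eta$ and $\xi$, so the flow of $\xi$ carries the image into itself; hence $\xi$ is tangent to $V$. (The converse — that for the stable germ $\varphi_k$ every vector field tangent to $V$ lifts — is what additionally gives that $\X_e$ together with the three families generate $\Theta_V$ over $\EE_{2k-1}$, but only this easy implication is needed to justify the displayed list.)

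First I would write the lifting equation in coordinates. The source coordinates are $u_1,\dots,u_{k-2},v_1,\dots,v_{k-1},y$, the first $2k-3$ components of $\varphi_k$ are the coordinate functions $u_i,v_i$, and the last two are $W_1=y^k+\sum_i u_iy^i$ and $W_2=\sum_i v_iy^i$. Hence a target vector field with components $(A_i,B_i,C_1,C_2)$ in the coordinates $U_i,V_i,W_1,W_2$ lifts if and only if, for a suitable source function $c$,
\[
\eta=\sum_i A_i(\varphi_k)\,\partial_{u_i}+\sum_i B_i(\varphi_k)\,\partial_{v_i}+c\,\partial_y ,
\]
subject to the two scalar identities obtained by applying $\eta$ to $W_1$ and $W_2$:
\[
\sum_i A_i(\varphi_k)\,y^i+c\Big(ky^{k-1}+\sum_i i\,u_iy^{i-1}\Big)=C_1(\varphi_k),\qquad
\sum_i B_i(\varphi_k)\,y^i+c\sum_i i\,v_iy^{i-1}=C_2(\varphi_k).
\]
Thus the theorem reduces to exhibiting, for each family $f$ and each $j$, the correct $\partial_y$-component $c^f_j$ and checking that these two identities hold as polynomial identities in $u,v,y$ once $W_1,W_2$ are replaced by the expressions above.

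Next, family by family, I would pin down $c^f_j$ from quasihomogeneity. The map $\varphi_k$ is quasihomogeneous for the weights read off from $\X_e$ (so $\deg y=1$, $\deg u_i=\deg v_i=k-i$, $\deg W_m=k$), each $\xi^f_j$ is quasihomogeneous — of degree $k-j$ for $f=1$ and degree $j-1$ for $f=2,3$, as one checks from the $C^f_{m,j}$ — and matching degrees forces $c^f_j$ into a small space of monomials in $u,v,y$, whose coefficients a short computation fixes. With $c^f_j$ in hand, verifying the two identities is a finite comparison of coefficients of powers of $y$; the one genuinely fiddly ingredient is the disciplined use of the conventions $U_{k-1}=V_k=0$, $U_k=1$ and $U_r=V_r=0$ for $r\le0$ or $r>k$, which is precisely the device that lets a single closed formula cover all index ranges and makes the "overflow" terms cancel against the monomials produced by $c^f_j\cdot\partial_y W_m$. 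Tangency of $\X_e$ itself is immediate: its lift is the source Euler field.

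I expect the main obstacle to be the bookkeeping in this last step — correctly guessing $c^2_j$ and $c^3_j$ and then tracking the substitution $W_1=y^k+\sum u_iy^i$, $W_2=\sum v_iy^i$ through the nested sums so that the coefficient-by-coefficient identities close, particularly the cancellations among terms with indices near or beyond $k$. If in addition one wants the completeness assertion — that $\X_e$ and the three families generate $\Theta_V$ over $\EE_{2k-1}$ — then the real content is a separate module computation: determine the $\EE_{2k-2}$-module of all solutions $(\eta,C_1,C_2)$ of the lifting equations, show it is generated by the lifts just constructed, and invoke the fact that for the stable germ $\varphi_k$ the liftable vector fields are exactly those tangent to $V$. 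That syzygy computation, not the individual verifications, is where any serious difficulty lies.
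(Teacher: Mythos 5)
The paper does not prove this theorem at all: it is imported verbatim from the reference \cite{liftables} (Houston--Littlestone, ``Vector fields liftable over corank $1$ stable maps''), so there is no in-paper proof to compare against. Your framework is nevertheless the right one, and it is the one the cited source's title advertises: since the first $2k-3$ components of $\varphi_k$ are coordinate functions, a target field $(A_i,B_i,C_1,C_2)$ is liftable iff the single unknown $c=\eta(y)$ satisfies the two overdetermined identities you write down, and liftability gives tangency of the image by intertwining of flows. Your degree bookkeeping is also correct ($\deg y=1$, $\deg U_i=\deg V_i=k-i$, $\deg W_m=k$, with $\xi^1_j$ of quasi-degree $k-j$ and $\xi^2_j,\xi^3_j$ of quasi-degree $j-1$), and you are right that the theorem as stated claims only tangency, not that these fields together with $\X_e$ generate $\Theta_V$; the latter is a separate syzygy/module computation.

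The gap is that the proposal defers precisely the content of the theorem. The statement is a list of explicit formulas, and verifying it means (a) producing the $\partial_y$-components $c^f_j$ and (b) checking the two identities coefficient-by-coefficient through the conventions $U_{k-1}=V_k=0$, $U_k=1$, $U_r=V_r=0$ for $r\le 0$ or $r>k$. You carry out neither step for any family: no candidate $c^f_j$ is written down, and not a single one of the cancellations ``near or beyond $k$'' that you correctly flag as the delicate point is exhibited. Quasihomogeneity confines $c^f_j$ to a finite-dimensional space but does not determine it, and the consistency of the two scalar equations in the single unknown $c$ is exactly what must be demonstrated and is not. As it stands this is a viable plan for a proof rather than a proof; to complete it you would need to write out $c^1_j$, $c^2_j$, $c^3_j$ explicitly (e.g.\ by solving the $W_2$-equation for $c$ and substituting into the $W_1$-equation, or vice versa) and carry through the index gymnastics for at least one family in full, with the others following the same pattern.
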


We can now classify certain functions on the image $\varphi _k$ by considering jets.
\begin{theorem}
\label{thm:classnlem}
Suppose that $h:(\C ^{2k-1},0)\to (\C ,0)$ has finite $_V\KK _e$-codimension and that $l\geq 2$.
\begin{enumerate}
% Inequalities changed from geq 2 and geq 4 by Roberta
\item If $k> 2$ and $j^{l-1}(h)=U_{k-2}$, then $h\sim _{_V\KK } U_{k-2} + V_{k-1}^l$.
\item If $k> 3$ and $j^{l-1}(h)=V_{k-1}+U_{k-3}$, then $h\sim _{_V\KK } V_{k-1}+U_{k-3} + U_{k-2}^l$.
\end{enumerate}
In both cases $_V\KK _e-\cod (h)=l$ and $h$ is $l-_V\KK $-determined.
\end{theorem}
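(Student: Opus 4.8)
The plan is to apply the complete transversal theorem (Theorem~\ref{thm:ct}) and the determinacy criterion (Theorem~\ref{thm:determinacy}) to $h$ in each case, using the explicit families of liftable vector fields from Theorem~\ref{firstfamily}. The overall strategy for each part is: (a) compute enough of the module $T_V\KK_1(h)$, where $V$ is the image of $\varphi_k$, to show that the only surviving $(l\!-\!1)$-complete transversal direction is the claimed monomial (for part (i), $V_{k-1}^l$; for part (ii), $U_{k-2}^l$); (b) conclude by the complete transversal theorem that $h$ is $_V\KK_1$-equivalent to $j^{l-1}(h)+\alpha\cdot(\text{that monomial})$; (c) if $\alpha\neq 0$, rescale to normalise $\alpha=1$ using the Euler field $\xi_e$ (or a coordinate change in the variable appearing in the monomial), and if $\alpha=0$ observe that this would force infinite $_V\KK_e$-codimension, contradicting the hypothesis; (d) check $l$-determinacy via Theorem~\ref{thm:determinacy}(i) and compute $_V\KK_e$-$\cod(h)=l$ directly from $\theta(h)/T_V\KK_e(h)$.

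Concretely, for part (i), with $j^{l-1}(h)=U_{k-2}$ one has $h^*(\M_1)\theta(h)=\M_p\cdot h$ containing $U_{k-2}$ plus higher-order terms, so $h^*(\M_1)\theta(h)$ already absorbs $U_{k-2}\cdot(\text{anything})$. For the Derlog part one feeds the vector fields $\xi^f_j$ and the Euler field $\xi_e$ into $h$ and reads off $\xi(h)=\sum (\text{component})\cdot \partial h/\partial(\cdot)$; since $\partial h/\partial U_{k-2}$ is a unit to leading order, each $\xi^f_j(h)$ contributes essentially its $A^f_{k-2,j}$ component modulo $\M^2$-corrections, and the list in Theorem~\ref{firstfamily} shows these span a large space of monomials in $\underline U,\underline V,\underline W$. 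The claim one must extract is that, after also using the $h^*(\M_1)$-part, everything in $\M^{l}\theta(h)$ is accounted for except the single direction $V_{k-1}^l$ (and then that this direction itself, once present with nonzero coefficient, is killed by a dilation in $V_{k-1}$ compatible with a liftable field — here the second or third family at $j=k-1$, or simply the Euler field, provides the scaling symmetry). Part (ii) is parallel with the roles of $U_{k-2}$ and $V_{k-1}$ interchanged in spirit: with $j^{l-1}(h)=V_{k-1}+U_{k-3}$ the partials $\partial h/\partial V_{k-1}$ and $\partial h/\partial U_{k-3}$ are units to leading order, and one shows the residual transversal direction is $U_{k-2}^l$.

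The main obstacle is step (a): honestly verifying the containment
\[
\M^{l}\theta(h)\subseteq T_V\KK_1(h)+\mathrm{span}\{V_{k-1}^l\}+\M^{l+1}\theta(h)
\]
(and its analogue in part (ii)), which requires knowing that the monomials produced by the $A^f_{i,j}$, $B^f_{i,j}$, $C^f_{i,j}$ together with the products $h\cdot(\text{monomial})$ exhaust all degree-$l$ monomials except one. This is a finite linear-algebra computation at each fixed degree, but it is the substantive content: one must check that the finiteness hypothesis on $_V\KK_e$-$\cod(h)$ forces $j^{l-1}(h)$ to be, up to the stated normal form, exactly $U_{k-2}$ (resp. $V_{k-1}+U_{k-3}$) and that no lower-degree obstruction survives, so that the induction on degree in the complete transversal method terminates with the single monomial shown. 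I would organise this by first recording, modulo $\M^2\theta(h)$ and modulo $h^*(\M_1)\theta(h)$, the leading parts of $\xi^f_j(h)$ for $f=1,2,3$ and $1\le j\le k-1$ together with $\xi_e(h)$, then arguing by a dimension count that the cokernel is $1$-dimensional and spanned by the asserted monomial; the determinacy and codimension statements then follow formally from Theorem~\ref{thm:determinacy} and the definition of $_V\KK_e$-$\cod$.
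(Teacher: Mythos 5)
Your strategy coincides with the paper's: compute an $l$-complete transversal for $j^{l-1}(h)$ using the liftable fields of Theorem~\ref{firstfamily}, identify the single surviving direction ($V_{k-1}^l$, resp.\ $U_{k-2}^l$), normalise its coefficient by a dilation preserving $V$, and finish with Theorem~\ref{thm:determinacy} and an explicit computation of $T_V\KK_e(j^l(h))$. Two concrete points. First, the linear algebra you defer is where all the content sits, and it is settled by reading off the \emph{linear} parts of the relevant components of the $\xi^f_j$ (linear terms arise because $U_k=1$ in the conventions of Theorem~\ref{firstfamily}): in case (i) the third family contributes, modulo genuinely quadratic terms, the monomials $V_1,\dots,V_{k-2},W_2$ to $\xi(h)$, the second family contributes $U_1,\dots,U_{k-3},W_1$, and the ideal $\langle h\rangle$ contributes $U_{k-2}$; multiplying by $\M^{l-1}$ (which keeps the fields in $\Theta_{V,1}$) then accounts for every degree-$l$ monomial except $V_{k-1}^l$, and similarly in case (ii) one needs $\xi^2_1(h)$ together with $h$ itself to separate $U_{k-3}$ from $V_{k-1}$. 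Second, your suggestion to normalise $\alpha$ ``simply by the Euler field'' fails exactly when $l=2$, which is the case needed for the codimension-$2$ classification: $U_{k-2}$ and $V_{k-1}^2$ have the same quasihomogeneous weight, so the Euler flow composed with a unit in the target fixes $U_{k-2}+\alpha V_{k-1}^2$. One needs a dilation acting on the $V$-directions only, and it must be checked to be liftable --- a dilation of $V_{k-1}$ alone is not; the paper integrates $(\xi_e-\tfrac{1}{k}\xi^2_1)/k$, whose flow is $(\underline{U},\underline{V},\underline{W})\mapsto(\underline{U},e^a\underline{V},W_1,e^aW_2)$ and does preserve $V$ while fixing $U_{k-2}$. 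With these two repairs your outline reproduces the paper's proof.
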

\begin{proof}
% Proof was adapted by Roberta from mine typing of her original notes.
(i) We aim for an $l$-complete transversal for $j^{l-1}(h)$ and so we work modulo
$\M ^{l+1}\theta (h)$.

By inspecting the linear parts of the vector fields above, we get
from the third family all monomials of degree $l$ with $W_2, V_1, \ldots , V_{k-2}$
in $\M \,T_V\mathcal{K}j^{l-1}(h) \subseteq T_V\mathcal{K}_1j^{l-1}(h)$. From the second family we get all monomials
of degree $l$ with $W_1,U_1, \ldots ,U_{k-3}$. From $\langle j^{l-1}(h)\rangle$ we get all monomials of degree $l$
with $U_{k-2}$.

Therefore a $l$-complete transversal is $\{V^l_{k-1}\}$ and so we have $j^l(h) = U_{k-1} +
\alpha V^l_{k-1}$. Since $h$ is finitely determined we can assume that $\alpha\neq 0$. By integrating the vector field $(\xi_e-(1/k)\xi_1^{2})/k$
we obtain the following diffeomorphism that preserves $V$:
$$\varphi(\underline{U},\underline{V},\underline{W})=(U_1,\ldots,U_{k-2},e^aV_1,\ldots,e^aV_{k-1},W_1,e^aW_2).$$
So applying it we have that $j^l(h)\sim U_{k-2}+V_{k-1}^l$.

We can calculate that
$$T_V\mathcal{K}_{e}(j^l(h)) =\langle U_1, \ldots ,U_{k-2}, V_1, \ldots, V_{k-2}, V^l_{k-1},W_1,W_2\rangle.$$
(To do this calculate that $T_V\mathcal{K}_{e}(j^l(h)) + \M^{l+2}$ is the above and use Nakayama's
Lemma.) Hence $j^l(h)$ has codimension $l$. Since $\M ^l \subseteq T_V\mathcal{K}_{e}(j^l(h))$, by Theorem~3.6(ii)
the determinacy of $j^l(h)$ is $l$. Hence $h\sim_{_V\mathcal{K}} U_{k-2}+V_{k-1}^l$ and the claimed codimension
and determinacy results hold.

(ii) We work in a similar way and produce a complete transversal. From the first
family we get all terms of degree $l$ in $W_2, V_1, \ldots , V_{k-2}$ in $T_V\mathcal{K}_{1}(j^{l-1}(h))$. From the
second family we get all terms of degree $l$ in $W_1,U_1, \ldots ,U_{k-4}$. From $\xi_1^2(j^{l-1}(h))$ and
$j^{l-1}(h)$ we get all terms of degree $l$ in $U_{k-3}, V_{k-1}$. Hence, $j^l(h) = U_{k-3}+V_{k-1}+\alpha U^l_{k-2}$.

By integrating the Euler vector field, we obtain that
$$\psi(\underline{U},\underline{V},\underline{W})=(e^{(k-1)a}U_1,\ldots,e^{2a}U_{k-2},e^{(k-1)a}V_1,\ldots,e^aV_{k-1},e^{ka}W_1,e^{ka}W_2)$$
is a diffeomorphism that preserves $V$.

Now, since $\alpha\neq 0$, by change of coordinates in source and target leads to $j^l(h)\sim_{_V\mathcal{K}}\, V_{k-1}+U_{k-3}+U_{k-2}^l$.

A straightforward calculation gives that
$$T_V\mathcal{K}_{e}(j^l(h)) =\langle U_1, \ldots ,U_{k-3},U_{k-2}^{l}, V_1, \ldots, V_{k-1},W_1,W_2\rangle. $$
We can then proceed as in (i).
\end{proof}

We shall give a classification under $\AA $-equivalence. First we make a corresponding classification under $_V\KK $-equivalence. The classification in the case $k=2$ was done by \cite{bruce-west} (actually for what we might call $_V\RR $-equivalence rather than $_V\KK $-equivalence) and independently by the second author \cite{rwik}.

\begin{theorem}[Classification of codimension $2$ functions on the cross cap]
\label{thm:mainclassn}
Let $V$ be the image of the cross cap for $\K =\C$.
Suppose that $h:(\C ^{2k-1},0)\to (\C ^q,0)$ has $_V\KK _e$-codimension $2$. Then $q\leq 2$.

(a) If $q=1$, then $h$ is $_V\KK $-equivalent to one of the following $2-_V\KK $-determined functions:
\begin{enumerate}
\item $U_{k-2}+V_{k-1}^2$, $k\geq 3$,
\item $V_{k-1}+U_{k-3}+U_{k-2}^2$, $k\geq 4$,
\item $V_{2}+W_1$, $k=3$,
\item $V_{1}+W_{1}^2$, $k=2$,
\item $W_1+V_1^2$, $k=2$.
\end{enumerate}

(b) If $q=2$, then $h$ is $_V\KK $-equivalent to one of the following $2-_V\KK $-determined functions:
\begin{enumerate}
\item $(V_1,W_1)$, $k=2$,
\item $(U_1,V_2+W_1)$, $k=3$,
\item $(U_2,U_1+ V_3+W_1)$, $k=4$.
\end{enumerate}
\end{theorem}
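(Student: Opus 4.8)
The plan is to follow the strategy of the proof of Theorem~\ref{thm:classnlem}: first determine the admissible $1$-jets of $h$, then run the complete transversal theorem (Theorem~\ref{thm:ct}) at degree $2$ using the explicit generators of $\Theta_V$ given in Theorem~\ref{firstfamily}, and finally apply the determinacy criterion (Theorem~\ref{thm:determinacy}(ii)) to obtain $2$-$_V\KK$-determinacy and to read off the codimension.

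I would first bound $q$. Each generator of $\Theta_V$ listed in Theorem~\ref{firstfamily} vanishes at the origin: $\xi_e$ is linear, and, reading off the formulae, every $\xi^f_j$ consists only of terms of degree $\ge 1$ (the substitutions $U_{k-1}=V_k=0$ and $U_k=1$ can lower a quadratic term to a linear one, but never create a constant term). Hence $\Theta_V\subseteq\M_p\theta_p$, so $\{\xi(h)\mid\xi\in\Theta_V\}\subseteq\M_p\theta(h)$, and since $h(0)=0$ also $h^*(\M_q)\theta(h)\subseteq\M_p\theta(h)$; therefore $T_V\KK_e(h)\subseteq\M_p\theta(h)$. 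This yields a surjection $\theta(h)/T_V\KK_e(h)\twoheadrightarrow\theta(h)/\M_p\theta(h)\cong\K^q$, and as the source has dimension $2$ we get $q\le 2$; also $q\ge1$, since $h$ is non-constant. Carrying the same estimate one order further shows $j^1(h)\ne0$ -- if $j^1(h)=0$ then $T_V\KK_e(h)\subseteq\M_p^2\theta(h)$ and the codimension would be at least $\dim_\K\EE_p/\M_p^2>2$ -- and, since $q\le 2$, it follows that $h$ has corank at most $1$.

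The heart of the proof is the $1$-jet classification. A $1$-jet is a nonzero linear map $\C^p\to\C^q$ of rank $q$ or $q-1$, and these must be classified up to the group of $1$-jets of elements of $_V\KK$: this group is generated by the scaling $\xi_e$, the linear parts of those $\xi^f_j$ that have one (these contribute various shears of the form $W_a\,\partial_{U_i}$, for example $W_2\,\partial_{U_{k-2}}$ from $\xi^3_{k-1}$), and $GL_q$ acting on the target; one also uses the diffeomorphisms obtained by integrating $\xi_e$ and $(\xi_e-(1/k)\xi^2_1)/k$, as in Theorem~\ref{thm:classnlem}. For each resulting normal form of $j^1(h)$ one decides whether the $_V\KK_e$-codimension can be $2$ by computing the degree-$2$ part of $T_V\KK_1(j^1(h))$ -- that is, the quadratic parts of the $\xi^f_j(j^1(h))$ together with $\M_p\cdot j^1(h)$ -- in exactly the way the complete transversals are computed in the proof of Theorem~\ref{thm:classnlem}. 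This should leave, up to equivalence, only: for $q=1$, the $1$-jets $U_{k-2}$ and $V_{k-1}+U_{k-3}$ in general, plus sporadic low-weight $1$-jets when $k=2$ or $k=3$; and for $q=2$, the $1$-jet $(U_{k-2},\,U_{k-3}+V_{k-1}+W_1)$ when $k\ge3$ together with $(V_1,W_1)$ when $k=2$.

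For each surviving $1$-jet I would then produce the $2$-complete transversal and apply Theorem~\ref{thm:determinacy}(ii). For $q=1$ with $j^1(h)$ equivalent to $U_{k-2}$ or to $V_{k-1}+U_{k-3}$ this is exactly Theorem~\ref{thm:classnlem} with $l=2$, giving the normal forms (a)(i) and (a)(ii), both $2$-$_V\KK$-determined of codimension $2$. The remaining possibilities lie in a fixed finite-dimensional jet space: $k=2$ and $k=3$ with $q=1$ give (a)(iii)--(a)(v), the case $k=2$ being already treated in \cite{bruce-west,rwik}, and $q=2$ gives the three entries of part~(b), where one checks in addition that $(U_{k-2},U_{k-3}+V_{k-1}+W_1)$ has $_V\KK_e$-codimension greater than $2$ once $k\ge5$, so that part~(b) is genuinely exhausted by $k\in\{2,3,4\}$. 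I expect the main obstacle to be precisely the $1$-jet analysis together with this exhaustiveness check: with no Lie-group orbit structure available for the $1$-jets themselves, one must verify by hand -- through a lengthy but routine bookkeeping of the quadratic parts of the three families $\xi^f_j$ contracted against $d(j^1(h))$, plus explicit computations in the sporadic small-$k$ cases -- that no linear part outside the stated list admits a finite-codimension-$2$ deformation.
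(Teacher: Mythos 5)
Your proposal follows essentially the same route as the paper: $q\le 2$ from the vanishing of $\Theta_V$ at the origin, a case analysis of $1$-jets using Mather's Lemma, the linear parts of the three families and target changes of coordinates, complete transversals via Theorem~\ref{thm:classnlem}, and the determinacy criterion, including the exhaustiveness check that $k\ge 5$ yields no codimension-$2$ orbit in part (b). The only refinement you miss is that the paper extracts determinacy directly from the codimension-$2$ hypothesis at the outset ($\M^2\subseteq T_V\KK_e(h)$ for $q=1$, and $\M\,\EE_{2k-1}^2\subseteq T_V\KK_e(h)$ for $q=2$), which in particular makes part (b) a pure $1$-jet computation and removes the need for your degree-$2$ transversal there.
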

\begin{proof}
As the vector fields tangent to $V$ all vanish at the origin we have that $e_i\notin T_V\KK _e(h)$ for all $i=1,\dots , q$, where $e_i=(0 \dots 0\ 1\ 0 \dots 0)$ is the vector with $1$ in position $i$ and zero elsewhere. Therefore $_V\KK _e-\cod (h)\geq q$. Hence $q\leq 2$.

(a) If $q=1$, then we must have $\M ^2 \subseteq T_V\KK _e(h)$, in particular by Theorem~\ref{thm:determinacy} we have that $h$ is 2-determined.

Let
\[
j^1(h) = \sum_{i=1}^{k-2} a_iU_i +  \sum_{i=1}^{k-1} b_iV_i + c_1 W_1 + c_2 W_2
\]
where $a_i, b_i, c_1, c_2 \in \C $.

Suppose that $a_{k-2}\neq 0$. We can use Mather's Lemma (Lemma~3.1 of \cite{matherstable}) to get  $j^1(h)\sim_{_V\KK } a_{k-2}U_{k-2} + b_{k-1} V_{k-1}$.

If $b_{k-1}\neq 0$, then from proof of Corollary~5.9 of \cite{liftables} $h$ would have codimension $1$.

If $b_{k-1}=0$, then again by Mather's Lemma we get that $j^1(h)=U_{k-2}$ and hence
$h\sim _{_V\KK } U_{k-2}+V_{k-1}^2$ by Theorem~\ref{thm:classnlem}.

Now suppose that $a_{k-2}=0$. If $b_{k-1}=0$, then we cannot get $U_{k-2}$ and $V_{k-1}$ in $T_V\KK (h)$ and so the codimension of $h$ would be greater than 2.

Suppose $b_{k-1}\neq 0$. If $k=3$ and $c_1\neq 0$ then by Mather's Lemma $j^1(h)\sim _{_V\KK } a_2V_2+c_1W_1$. By a change of coordinates and Theorem~\ref{thm:determinacy} we get that $h\sim _{_V\KK }V_{2}+W_1$ and has codimension 2.

Suppose $k\geq 4$. If $a_{k-3}\neq 0$, then we have $j^1(h)=a_{k-3}U_{k-3}+b_{k-1}V_{k-1}$. By a change of coordinates we get $j^1(h)\sim _{_V\KK } U_{k-3}+V_{k-1}$. From Theorem~\ref{thm:classnlem} we deduce that $h\sim _{_V\KK } U_{k-3}+V_{k-1}+U_{k-2}^2$.

If $a_{k-3}=0$, then a simple calculation shows that $1$, $U_{k-3}$ and $U_{k-2}$ are not in $T_V\KK _e(h)$ and so we get codimension at least 3.

(b) If $q=2$, then we must have $\M \,\EE _{2k-1}^2 \subseteq T_V\KK _e(h)$, and so $h$ is $1$-determined.

Let
\[
j^1(h) = (\sum_{i=1}^{k-2} a_iU_i +  \sum_{i=1}^{k-1} b_iV_i + c_1 W_1 + c_2 W_2,\sum_{i=1}^{k-2}A_iU_i +  \sum_{i=1}^{k-1} B_iV_i + C_1 W_1 + C_2 W_2)
\]
where $a_i, b_i, c_1, c_2, A_i, B_i, C_1, C_2\in \C $.

Suppose $k=2$ and $b_1C_1\neq 0$. Then after suitable change of coordinates in target we can take $b_1=1$, $c_1=0$, $C_1=1$ and $B_1=0$. Applying Mather's Lemma one can easily see that there is only one orbit, namely the one with representative $(V_1,W_1)$

Suppose $k>2$ and $a_{k-2}B_{k-1}\neq 0$. Then after suitable change of coordinates in target we can suppose $a_{k-2}=1$, $A_{k-2}=0$, $B_{k-1}=1$ and $b_{k-1}=0$.

Let $T:=T_V\KK _e(j^1(h))+{\M}^2\,\EE _{2k-1}^2$.

When $k=3$, from vector fields tangent to $V$ in the first and third family we have that $V_1e_i, W_2\,e_i\in\, T$, $i=1,2$. From  vector fields tangent to $V$ in the second family and the Euler vector field we also get $V_2\,e_2, W_1e_1\in \,T$. If $C_1\neq 0$ then all the remaining monomials of degree $1$ are in $T$ by vector fields in target. By change of coordinates in source we can take $C_1=1$. Therefore there is only one orbit with representative $(U_1,V_2+W_1)$.

When $k=4$, from vector fields tangent to $V$ in the first and third family we have that $V_j\,e_i, W_2\,e_i\in\, T$, $i,j=1,2$. From  vector fields tangent to $V$ in the second family and the Euler vector field we also get $V_3\,e_2, W_1e_1\in\, T$. If $4a_1A_1+3C_1\neq 0$ we get the remaining monomials of degree $1$ in $T$ by vector fields in source and target, since $a_1U_1e_1+C_1W_1e_2, 3U_1e_1-4A_1W_1e_2 \in T$. Therefore by  Mather's Lemma and change of coordinates in source $h\sim _{_V\KK } (U_2,U_1+ V_3+W_1)$.

Finally, if $k\geq 5$ there is no $_V\KK _e$-codimension 2 orbit. In fact, for $k=5$ and proceeding as above we get $V_je_i$, $V_4e_2$, $W_2e_i$ and $W_1e_1$ in $T$, $i=1,2$, $j=1,\ldots,3$. To obtain $U_je_i$, $V_4e_1$ and $W_1e_2$, $i=1,2$, $j=1,\ldots,3$, we have only 7 independent relations in $T$, therefore the codimension at 1-jet level is at least 1. It is not difficult to see that
$${\M}\,\EE _{9}^2\subseteq T_V\KK _e(j^1(h))+span\{W_1e_2\}+{\M}^2\,\EE _{9}^2.$$

\end{proof}

%\begin{remark} Let  $F:(\C ^{2k-2},0)\to (\C ^{2k-1},0)$ be the minimal cross cap mapping of multiplicity $k$ and $h:(\C ^p,0)\to (\C ^q,0)$ be an analytic map. If $k\geq 5$ and
%$f=h^\# (F)$ is simple, then $q=1$. Indeed, this follows from Propositions 4.1 and 4.2 of \cite{rrwa} that say that the multiplicity of $f$, $m_f(0)$, satisfies $m_f(0)\leq [(2k-2-q)/2]+2$. If $2k-2-q$ is even and greater than or equal to 6 then $m_f(0)\leq [(2k-2-q)/2]+1$.
%\end{remark}

%%%%%%%%%%%%%%%%%%%%%%%%%%%%%%%%%%%%%%%%%%%%%%%%%%%%%%%%%%%%%%%%%%%%%%%%%%%%%%%%%%%%%%%%%%%%%%%%%%%%%%%%%%%%%%%%%%%%%%%%
\begin{remark}
Mohammad Al-Bahadeli, a student of the first author, has extensively improved the classification. This can be found in \cite{moh-thesis}.
%\label{rem:H2} THESE NEED TO BE VERIFIED. THE CLASSIFICATION CONTRADICTS HIS CLAIM.
%Mohammad Al-Bahadeli, a student of the first author, has further improved the classification and claims that in (ii) the term $U_{k-3}$ may be removed since it is a trivial unfolding of $V_{k-1}+U_{k-2}^2$.
%
%Furthermore, for the $q=2$ case, he claims that the only $_V\KK _e$-codimension $2$ map-germ occurs for $k=3$ and in this case is given by $h(U_1,V_1,V_2,W_1,W_2)=(V_2+W_1,U_1)$.
%The map $h^\# (\varphi _3)$ is equivalent to $H_2$ in Mond's list \cite{mond-classn}. This map will be used in a counterexample below.
\end{remark}

Due to the nature of the maps in the above theorem we can produce an $\AA $-classification.

\begin{theorem}
Suppose that $f:(\C^n,0)\to (\C ^{n+1},0)$,
%$f:(\C^{2k-3},0)\to (\C ^{2k-2},0)$, $k\geq 3$,
is a corank $1$ map of $\AA _e$-codimension $2$.

If there exists a $1$-parameter stable unfolding which is a minimal stable map, then $f$ is $\AA $-equivalent to a map of the form:
\begin{enumerate}
\item $(u,v,y^k+v_{k-1}^2y^{k-2} + \sum _{i=1}^{k-3} u_iy^i, \sum_{i=1}^{k-1} v_iy^i )$, $k\geq 3$,
\item $(u,v,y^k+ \sum _{i=1}^{k-2} u_iy^i,  (u_{k-3}+u_{k-2}^2)y^{k-1} + \sum_{i=1}^{k-2} v_iy^i )$, $k\geq 4$,
\item $(u_1,v_1,y^3+u_1y,y^5+v_1y)$, $k=3$,
\item $(y^2,y^3)$, $k=2$.
\end{enumerate}

If there exists a $2$-parameter stable unfolding which is a minimal stable map, then $f$ is $\AA $-equivalent to a map of the form:
\begin{enumerate}
\item $(v_1, y^3,y^5+v_1y)$, $k=3$,
\item $(u_1,v_1,v_2,y^4+u_1y,y^7+v_1y+v_2y^2+u_1y^3)$, $k=4$.
\end{enumerate}
\end{theorem}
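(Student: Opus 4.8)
The plan is to apply Theorem~\ref{mainthm} to transfer the $_V\KK$-classification of Theorem~\ref{thm:mainclassn} into an $\AA$-classification. The starting point is the structural remark that a corank $1$ map $f:(\C^n,0)\to(\C^{n+1},0)$ which admits a one- or two-parameter stable unfolding that is minimal can be realised as a sharp pullback $h^\#(\varphi_k)$ of a minimal cross cap mapping $\varphi_k$ by a submersion $h$. Concretely, if $f$ has a $j$-parameter minimal stable unfolding $F$, then $F$ is $\AA$-equivalent to a trivial unfolding of some $\varphi_k$ (with $2k-1 = n+1+j$ on the target side), and $f$ is recovered by intersecting with a codimension-$j$ plane transverse to $F$, i.e.\ $f\sim_\AA h^\#(\varphi_k)$ for a submersion $h:(\C^{2k-1},0)\to(\C^q,0)$ with $q=j$. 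By the Proposition following Theorem~\ref{thm:determinacy}, $f$ being $\AA$-finite forces $h$ to be $_V\KK$-finite, and by Theorem~\ref{thm:damon-main} we get $_V\KK_e\text{-}\cod(h) = \AA_e\text{-}\cod(f) = 2$.

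Next I would invoke Theorem~\ref{thm:mainclassn}: with $V$ the image of the cross cap, every $h$ of $_V\KK_e$-codimension $2$ is $_V\KK$-equivalent to one of the listed normal forms — list (a) for $q=1$ (the one-parameter unfolding case) and list (b) for $q=2$ (the two-parameter case). Since $h^\#(\varphi_k)$ is finitely $\AA$-determined (it is $\AA$-finite, hence finitely determined) and $\varphi_k$ is stable with $n<p$, the hypotheses of Theorem~\ref{mainthm} are met, so $h\sim_{_V\KK}\widetilde h$ implies $h^\#(\varphi_k)\sim_\AA \widetilde h^\#(\varphi_k)$. Thus $f$ is $\AA$-equivalent to $\widetilde h^\#(\varphi_k)$ for $\widetilde h$ one of the normal forms. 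The remaining work is purely computational: for each normal form $\widetilde h$ of Theorem~\ref{thm:mainclassn}, explicitly compute the sharp pullback $\widetilde h^\#(\varphi_k)$ and simplify it to the stated map-germ. For instance, for $\widetilde h = U_{k-2}+V_{k-1}^2$ one sets the function $U_{k-2}+V_{k-1}^2$ to zero on the image of $\varphi_k$, solves for $u_{k-2} = -v_{k-1}^2$, substitutes back into $\varphi_k$, and re-indexes coordinates to land on form (1) of the conclusion; the other cases are analogous, with the $q=1$ normal forms (iii), (iv), (v) collapsing to the $k=3$ and $k=2$ low-dimensional entries. One should note that normal forms (iv) and (v) of Theorem~\ref{thm:mainclassn}(a) at $k=2$ both produce (up to $\AA$-equivalence) the cross cap $(y^2,y^3)$, which is why the conclusion lists only one $k=2$ entry in the one-parameter case.

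The main obstacle is the bookkeeping in the sharp-pullback computation: one must carefully track how the coordinates $(U_i,V_i,W_1,W_2)$ restrict under $\widetilde h^{-1}(0)$, how the source coordinates $(u_i,v_i,y)$ of $\varphi_k$ transform, and — crucially — verify that the resulting germ really is the claimed normal form rather than merely $\KK$-equivalent to it. Here the force of Theorem~\ref{mainthm} is essential: it guarantees that the sharp pullbacks of $_V\KK$-equivalent submersions are genuinely $\AA$-equivalent (not just $\KK$-equivalent), so once the pullback is computed for one representative of each $_V\KK$-class, no further $\AA$-theoretic argument is needed. A secondary point requiring care is checking that every $f$ satisfying the hypothesis does arise as such a sharp pullback — i.e.\ that the minimal stable unfolding is transverse to the relevant plane and that the corank-$1$, $n<p$ conditions of Theorem~\ref{mainthm} hold — but this is immediate from stability of $\varphi_k$ and the definition of a stable unfolding. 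Finally one records that each listed $\widetilde h$ is $2\text{-}_V\KK$-determined (from Theorem~\ref{thm:mainclassn}), which guarantees the normal forms are complete and non-redundant at the level needed.
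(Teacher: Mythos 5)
Your overall strategy is exactly the paper's: realise $f$ as a sharp pullback $h^\#(\varphi_k)$ of a minimal cross cap, use Theorem~\ref{thm:damon-main} to see that $h$ has $_V\KK _e$-codimension $2$, invoke the classification of Theorem~\ref{thm:mainclassn}, and transfer back to $\AA$-equivalence with Theorem~\ref{mainthm}. The dimension count $2k-1=n+1+j$, the use of the finiteness proposition, and the explicit substitution for $h=V_2+W_1$ all match the paper's argument.

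There is, however, a genuine gap, and it sits precisely where you declare the point ``immediate from stability'': the transversality hypothesis of Theorem~\ref{mainthm} is \emph{not} automatic for the normal forms produced by Theorem~\ref{thm:mainclassn}. For $k=2$ and $\widetilde h=W_1+V_1^2$ (normal form (v) of Theorem~\ref{thm:mainclassn}(a)), the cross cap $\varphi_2(v_1,y)=(v_1,y^2,v_1y)$ is not transverse to $\widetilde h^{-1}(0)$ at the origin: the image of $d\varphi_2(0)$ is spanned by $\partial/\partial V_1$, which already lies in $\ker d\widetilde h(0)$. Indeed $\widetilde h\circ\varphi_2=y^2+v_1^2$ has singular zero set (two lines through $0$), so $\widetilde h^\#(\varphi_2)$ is not a mono-germ from a smooth source at all. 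The paper explicitly flags this case and excludes it; it does not ``collapse'' with (iv). Your claim that (iv) and (v) both produce the ``cross cap $(y^2,y^3)$'' is therefore wrong on two counts: (v) is outside the scope of Theorem~\ref{mainthm}, and the sharp pullback of (iv), $V_1+W_1^2$, computes to $y\mapsto(y^2,-y^5)\sim_\AA(y^2,y^5)$, the germ of $\AA _e$-codimension $2$ (whereas $(y^2,y^3)$ has $\AA _e$-codimension $1$ and is a cusp, not a cross cap). So in compiling the final list you must verify transversality normal form by normal form rather than appeal to stability of $\varphi_k$; what stability does give you is only the forward direction, namely that every $f$ satisfying the hypothesis arises from some transverse $h$.
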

\begin{proof}
If there exists a $1$-parameter stable unfolding, then we can assume that $f$ is the sharp pullback of a function on $(\C ^{2k-2},0)$. From Theorem~\ref{thm:damon-main} we know that $h$ has $_V\KK _e$-codimension $2$ and hence we may use the classification in Theorem~\ref{thm:mainclassn}. The mappings in the statement above arise simply from this classification.

For (iii) we have $h=V_2+W_1$ and so can replace $v_2$ by $-W_1=-(y^3+u_1y)$ as follows:
\begin{eqnarray*}
& & (u_1,v_1,y^3+u_1y, -(y^3+u_1y)y^2+v_1y)\\
&\sim _\AA &(u_1,v_1,y^3+u_1y, -y^5+(v_1+u_1^2)y)\\
&\sim _\AA &(u_1,x-u_1^2,y^3+u_1y, -y^5+xy)\\
&\sim _\AA &(u_1,x,y^3+u_1y, y^5+xy).
\end{eqnarray*}

When $k=2$ and $h=W_1+V_1^2$ one cannot apply Theorem~\ref{mainthm} since in this case  $F$ is not transverse to $h^{-1}(0)$.

If there exists a $2$-parameter stable unfolding, then we can assume that $f$ is the sharp pullback of a map $(\C ^{2k-2},0)\to (\C ^2,0)$. As above we use the classification in Theorem~\ref{thm:mainclassn}. Now change of coordinates in source and target leads to the result.

\end{proof}
%It is clearly seen in the above that the $\AA $-classification is much easier to find from the $_V\KK$-classification than if one directly used a complete transversal method for $\AA $-equivalence, as in say \cite{hk}.
\begin{remark}
The mappings in the theorem have appeared in  special case within various classifications. For example (i) corresponds to $Q_2$ and (iii) corresponds to $P_2$ in in \cite{hk}.
%DOES (ii) CORRESPOND TO ANY KNOWN EXAMPLE?
\end{remark}

\begin{remark}
Note that the effort involved in this classification is considerably less than a straightforward attempt to apply the complete transversal method to $\AA $-equivalence as in say \cite{hk}. It is also far simpler than the method of explicit diffeomorphisms in source and target used in \cite{cooper} to find an $\AA_e$-codimension one classification.
\end{remark}

\section{A Counterexample}
\label{sec:countereg}
The $_V\KK _e$-codimension of a map germ on a minimal cross cap is given by
\[
\dim _\C \frac{\theta (h)}{\langle \xi _e(h) , \xi_j^1 (h), \xi_j^2 (h), \xi_j^3 (h) \rangle _{j=1}^{k-1}  + h^*(\M _q)\theta (h)} .
\]
If we drop the Euler vector field, then the dimension
\[
\dim _\C \frac{\theta (h)}{\langle \xi_j^1 (h), \xi_j^2 (h), \xi_j^3 (h) \rangle _{j=1}^{k-1}  + h^*(\M _q)\theta (h)}
\]
appears, in many examples, to calculate the image Milnor number of $\varphi ^\# (h)$. In the case of $\AA _e$-codimension 1 germs on minimal cross caps it can be seen from the final remark in \cite{liftables} that the third family plays no part in the calculation of this dimension.
If, for general $_V\KK _e$-codimension, one could drop one of the families, then one would have the `right' number of generators to prove the Mond conjecture, see \cite{mond-vancyc}, for the case of corank $1$ maps from $(\C ^n,0)$ to $(\C ^{n+1},0)$. The following example is one in which all three families are required and hence is a counterexample to the suggestion that only two families are required in general. Note that this not a counterexample to the Mond conjecture.

\begin{example}
Let $V$ be the image of the multiplicity 3 minimal cross cap, denoted $\varphi _3$ above. Let $h(U_1,V_1,V_2,W_1,W_2)=(V_2+W_1,U_1)$. %(See Remark~\ref{rem:H2})
The map $h^\# (\varphi _3)$ is equivalent to $H_2$ in Mond's list \cite{mond-classn}.

We shall now calculate $\langle \xi_j^1 (h), \xi_j^2 (h), \xi_j^3 (h) \rangle _{j=1}^{k-1}  + h^*(\M _q)\theta (h)$ (this is just $T_V\KK _e(h)$ with the Euler vector field removed from the definition).
The dimension of the normal space of this conjecturally gives the image Milnor number. (This reasoning for this conjecture is not relevant and will not be given in this paper -- our objective is only to describe the normal space.) Here the normal space is generated by
$\left( \begin{array}{c} 1 \\ 0 \end{array} \right) $
and
$\left( \begin{array}{c} 0 \\ 1 \end{array} \right) $.

Applying the three families to $h$ gives
\[
\left( \begin{array}{c} W_2 \\ 0 \end{array} \right)
\left( \begin{array}{c}  V_1 \\ 0  \end{array} \right) ,
\]
\[
\left( \begin{array}{c} -2V_2+3W_1\\ 2U_1 \end{array} \right)
\left( \begin{array}{c}  V_1 \\ 3W_1  \end{array} \right),
\]
\[
\left( \begin{array}{c} W_2\\ V_1 \end{array} \right)
\left( \begin{array}{c}  0 \\ W_2  \end{array} \right) .
\]
The other vector fields generating our module are
\[
\left( \begin{array}{c} V_2+W_1 \\  0 \end{array} \right)
\left( \begin{array}{c} 0  \\ V_2+W_1 \end{array} \right)
\left( \begin{array}{c}  U_1 \\ 0 \end{array} \right)
\left( \begin{array}{c}  0\\  U_1 \end{array} \right) .
\]
Note that to the third family is required to generate the module, without it we cannot get $V_1$ and $W_2$ in the bottom row of the vector. To get $V_1$ we use the first vector of the pair
\[
\left( \begin{array}{c} W_2\\ V_1 \end{array} \right)
\]
but then we need to use the first member of the first family
\[
\left( \begin{array}{c} W_2 \\ 0 \end{array} \right)
\]
to get this.

Hence we definitely require the first and third families. However, we note that this does not give us the whole of our modified tangent space, $\langle \xi_j^1 (h), \xi_j^2 (h), \xi_j^3 (h) \rangle _{j=1}^{k-1}  + h^*(\M _q)\theta (h)$. Therefore, unlike the $\AA _e$-codimension $1$ case, we cannot hope to drop one family from our calculations to get the `right number of generators' to prove the Mond conjecture.
\end{example}


\begin{thebibliography}{12}

%\bibitem[]{}
\bibitem{moh-thesis} M.S.J.~Al-Bahadeli, {\em{Classification of Real and Complex Analytic Map-Germs on the Generalized Cross Cap}}, PhD Thesis, University of Leeds, 2012.

\bibitem{bruce-nato} J.W.~Bruce, Classifications in singularity theory and
their applications, in {\em{New Developments in Singularity Theory}}, eds. D.Siersma et al,
Kluwer Academic Publishers, Amsterdam, 2001, pp 3-33.

\bibitem{bdpk} J.W.~Bruce, N.P.~Kirk and A.A.~du~Plessis, Complete transversals and the classification of singularities, {\em{Nonlinearity}} 10 (1997), 253-275.

\bibitem{bdpw} J.W.~Bruce, A.A.~du~Plessis, and C.T.C.~Wall, {\em{Invent. Math.}} 88, 521-554 (1987).

\bibitem{bruce-west} J.W.~Bruce and J.M.~West, Functions on cross-caps, {\em{Math.\ Proc.\ Camb.\ Phil.\ Soc.}} 123 (1998), 19-39.

\bibitem{damonwark} J.~Damon, A-equivalence and the equivalence of sections of images and discriminants, in {\em{Singularity theory and its applications, Part I (Coventry, 1988/1989)}}, 93-121, Lecture Notes in Math., 1462, Springer, Berlin, 1991.

\bibitem{legacyiii} J.~Damon, On the legacy of free divisors. III. Functions and divisors on complete intersections, {\em{Q. J.\ Math.}} 57 (2006), no. 1, 49-79.

\bibitem{cooper} T.~Cooper, Map germs of $\AA _e$-codimension
one, Ph.D. Thesis, University of Warwick, 1993.


%\bibitem[GW]{gaff-wil} Gaffney, Terence; Wilson, Leslie Equivalence theorems in global singularity theory. Singularities, Part 1 (Arcata, Calif., 1981), 439--447, Proc. Sympos. Pure Math., 40, Amer. Math. Soc., Providence, RI, 1983.

\bibitem{dupgaffwil} A.\ du Plessis, T.\ Gaffney, L.\ Wilson, Map-germs determined by their discriminants., in {\em{Stratifications, singularities and differential equations, I (Marseille, 1990; Honolulu, HI, 1990)}}, ed D.\ Trotman and L.\ Wilson, 1-40, Travaux en Cours, 54, Hermann, Paris, 1997.

\bibitem{gibson} C. G.~Gibson, {\em{Singular points of smooth mappings}}, Research Notes in Mathematics, 25. Pitman (Advanced Publishing Program), Boston, Mass., 1979.

\bibitem{gibetal} C.G.~Gibson, K.~Wirthm\"uller, A.~du Plessis, E.J.N.~Looijenga, {\em{Topological stability of smooth mappings}}, Lecture Notes in Mathematics, Vol.\ 552, Springer-Verlag, Berlin-New York, 1976.

\bibitem{augsmooth} K.\ Houston, Augmentations of singularities of smooth mappings, {\em{International Journal of Mathematics}}, Vol 15(2), 2004, 111-124.

\bibitem{codim1} K.~Houston, On the classification of complex multi-germs of corank one and codimension one, {\em{Math. Scand.}} 96 (2005), no. 2, 203-223.

\bibitem{liftables} K.~Houston and D.~Littlestone, Vector fields liftable over corank $1$ stable maps, preprint 2009. arXiv:0905.0556

\bibitem{hk} K.~Houston and N.P.\ Kirk, On the classification
and geometry of corank 1 map-germs from three-space to four-space, in
{\em{Singularity Theory}}, Lond.\ Math.\ Soc.\ Lecture Notes 263,
J.W.\ Bruce and D.\ Mond, Cambridge University Press 1999, 325-351.


\bibitem{danthesis} D.~Littlestone, Tangent Vector Fields on the Generalized Cross Cap, MPhil Thesis, University of Leeds (2008).

\bibitem{matherIII} John N.\ Mather,  Stability of $C\sp{\infty }$ mappings. III: Finitely determined mapgerms, Inst. Hautes \'Etudes Sci. Publ. Math., No. 35, (1968), 279-308.


\bibitem{matherstable} John N.\ Mather, Stability of $C^\infty $ mappings. IV: Classification of stable germs by $R$-algebras, Inst. Hautes \'Etudes Sci. Publ. Math., No. 37, (1969), 223-248.
%Mather paper where stable maps A-equiv give K-equiv. IS IT THIS OR NEXT? J.N.~Mather, Stability of $C\sp{\infty }$ mappings V, Transversality, {\em{Advances in Math.}} 4 (1970) 301--336.


\bibitem{mond-classn} D.M.Q.\ Mond, On the classification of germs of maps from $\R ^2$ to $\R ^3$, {\em{Proc.\ London Math.\ Soc.}} (3) 50 (1985), no. 2, 333-369.

\bibitem{mond-vancyc} D.M.Q.\ Mond, Vanishing cycles for analytic maps,  {\em{Singularity theory and its applications, Part I (Coventry, 1988/1989)}}, 221-234, Lecture Notes in Math., 1462, Springer, Berlin, 1991.

\bibitem{rrwa} J.\ Rieger, M.A.S.\ Ruas, R.\ Wik Atique, M-deformations of  $\cal{A}$-simple germs from $\mathbb{R}^n$ to $\mathbb{R}^{n+1}$, {\em{Mathematical Proceedings of the Cambridge Philosophical Society}} 144 (2008), 181-195.

\bibitem{rwik} R. Wik Atique, On the classification of multi-germs of maps from $\C ^2$ to $\C ^3$ under A-equivalence, in {\em{Real and complex singularities (S\~{a}o Carlos, 1998)}}, 119-133, Chapman \& Hall/CRC Res. Notes Math., 412, Chapman \& Hall/CRC, Boca Raton, FL, 2000.

\end{thebibliography}
\end{document}